\numberwithin{equation}{section}
\newtheorem{theorem}{Theorem}[section]
\newtheorem{proposition}[theorem]{Proposition}
\newtheorem{lemma}[theorem]{Lemma}
\theoremstyle{definition}
\newcommand{\R}{\mathbb{R}}
\newcommand{\dis}{\displaystyle}
\begin{document}

\title
 [Standing waves for nonlinear Schr\"odinger equation with rotation]
 {Multiplicity, asymptotics and stability of standing waves for nonlinear Schr\"odinger equation with rotation}

{\let\thefootnote\relax \footnotetext{This work was supported by National Natural Science Foundation of China (Grant Nos. 11901147, 11771166), the Fundamental Research Funds for the Central Universities of China (Grant No. JZ2020HGTB0030) and the excellent doctorial dissertation cultivation from Central China Normal University (Grant No. 2019YBZZ064).}}


\maketitle
\begin{center}
\author{Xiao Luo}
\footnote{Email addresses: luoxiao@hfut.edu.cn (X. Luo).}
\author{Tao Yang}
\footnote{Email addresses: yangt@mails.ccnu.edu.cn (T. Yang).}
\end{center}

%

\begin{center}
\address {1 School of Mathematics, Hefei University of Technology, Hefei, 230009, P. R. China}

\address {2 School of Mathematics and Statistics, Central China Normal University, Wuhan, 430079, P. R. China}
\end{center}

\maketitle

\begin{abstract}
In this article, we study the multiplicity, asymptotics and stability of standing waves  with prescribed mass $c>0$ for nonlinear Schr\"odinger equation with rotation in the mass-supercritical regime arising in Bose-Einstein condensation. Under suitable restriction on the
rotation frequency, by searching critical points of the corresponding energy functional on the mass-sphere, we obtain a local minimizer $u_c$ and a mountain pass solution $\hat{u}_c$. 
Furthermore, we show that $u_c$ is a ground state for small mass $c>0$ and describe a mass collapse behavior of the minimizers as $c\to 0$, while $\hat{u}_c$ is an excited state. Finally, we prove that the standing wave associated with $u_c$ is stable. Notice that the pioneering works \cite{aMsC,shYZ} imply that finite time blow-up of solutions to this model occurred in the mass-supercritical setting, therefore, we in the present paper obtain a new stability result. The main contribution of this paper is to extend the main results in \cite{JeSp,gYlW} concerning the same model from mass-subcritical and mass-critical regimes to mass-supercritical regime, where the physically most relevant case is covered.


\end{abstract}
\maketitle

{\bf Key words }:  Bose-Einstein condensation; Rotation; Multiplicity; Asymptotics; Stability.

{\bf 2010 Mathematics Subject Classification }: 35A15, 35B35, 35B40.


\section{Introduction and Main Results}
In this paper, we study the multiplicity, asymptotics and stability of standing waves with prescribed mass for the nonlinear Schr\"odinger equation with rotation
\begin{equation}\label{1.1}
    \left\{
 \renewcommand{\arraystretch}{1.1}
 \begin{array}{ll}
  i \partial_{t} \psi=-\frac{1}{2} \Delta \psi+V(x)\psi-\Omega \cdot L \psi-a|\psi|^{p-2} \psi, \quad \ (t,x)\in \R^+ \times \R^N,
  \vspace {.25cm}\\
 \psi(0,x)=\psi_{0}(x),
 \end{array}
 \right.
\end{equation}
where $a\!>\!0$, $N\!=\!2,3$, $V(x)\!=\!\frac{|x|^2}{2}$ and $2\!+\!\frac{4}{N}\!\leq\!p\!<\!2^*\!:=\!\frac{2N}{(N-2)^{+}}$.
The rotation term $\Omega \cdot L$ reads
$$\Omega \cdot L:=-i \Omega \cdot (x \wedge \nabla)=-i (\Omega \wedge x) \cdot \nabla=-i|\Omega|\left(x_{1} \partial_{x_{2}}-x_{2} \partial_{x_{1}}\right),$$
where $i\!=\!\sqrt{-1}$, $\Omega\!=\!(0,0,|\Omega|) \!\in \!\R^3$ is a given angular velocity vector, $L\!=\!-i x \wedge \nabla$ is the quantum mechanical angular momentum operator and $\wedge$ is the wedge product of the two vectors. 

Problem \eqref{1.1} with $p=4$ arises in Bose-Einstein condensation (BEC), which describes the quantum effects in macro scope. Physically, the BEC is set into rotation by a stirring potential, which is usually induced by a laser \cite{kMaD,kMFD,MAnd,Stoc} (see also \cite{WbAo} for numerical simulations). The appearance of quantum vortices in rapidly rotating BEC has been studied by \cite{FtAl,BaHw,CaWl,NrCo,Dalf,Fete,tMUd} and the references therein. In the mean-field regime, rotating BEC can be accurately described by the nonlinear Schr\"odinger equation \eqref{1.1} (see \cite{eBgr,EiRi,eiRi,MeCa}). The power term in \eqref{1.1} describes the mean-field self-interaction of the condensate particles and the parameter $a\!>\!0$ ($a\!<\!0$) characterizes the strengthen of attractive (repulsive) interactions between the cold atoms. Vortices are believed to be unstable for $a\!>\!0$ (see \cite{CaWl,ColA,tMUd}), but they form stable lattice configurations for $a\!<\!0$ (see \cite{FtAl,NrCo,Fete}). We mainly consider the existence and stability of standing waves of \eqref{1.1} in the general case $2\!+\!\frac{4}{N}\!\leq\!p\!<\!2^*$, where the physically most relevant case $p\!=\!4$ is covered.


Throughout this paper, we denote the norm of ${L^p}({\R^N})$ by ${\left\| u \right\|_p}: = {({\int_{{\R^N}} {\left| u \right|} ^p})^{\frac{1}{p}}}$ for any $1 \le p < \infty $.
Our working space $\Sigma$ is defined as
\[\Sigma:=\Big\{u\in {H^1}({\R^N},\mathbb{C}): \int_{{\R^N}} {{{\left| x \right|}^2}{{\left| u \right|}^2}}<+\infty\Big\},\]
which is a Hilbert space  with the inner product and norm
\[(u,v)_\Sigma: = \text{Re}\int_{{\R^N}} ({\nabla u\nabla \bar{v}+|x|^2u\bar{v}}  + {u\bar{v}})dx,~~~~
{\left\| u \right\|_\Sigma} := (\|u\|_{\dot{\Sigma}}^2+ \left\| u \right\|_2^2)^{\frac{1}{2}},\]
where $\|u\|_{\dot{\Sigma}}^2:=\left\| {\nabla u} \right\|_2^2 +\left\| xu \right\|_2^2$,``Re" stays for the real part and $\bar{v}$ denotes the conjugate of $v$. We use ``$\rightarrow$" and ``$\rightharpoonup$" respectively to denote the strong and weak convergence in the related function spaces. $C$ will denote a positive constant unless specified. $o_{n}(1)$ and $O_{n}(1)$ mean that $|o_{n}(1)|\to 0$ as $n\to+\infty$ and $|O_{n}(1)|\leq C$ as $n\to+\infty$, respectively. $\mathbb{R}$ and $\mathbb{C}$ denote the sets of real and complex numbers respectively.

A standing wave of \eqref{1.1} with a prescribed mass $c\!>\!0$ is a solution having the form $\psi (t,x) \!=\! e^{-i \omega t} u(x)$ for some $(u,\omega)\!\in\! \Sigma \!\times\! \R$ such that $\|u\|_2^2\!=\!\|\psi \|_2^2\!=\!c$ and $u$ weakly solves
\begin{align}\label{1.3}
\left(-\frac{1}{2} \Delta+\frac{1}{2}|x|^2-(\Omega \cdot L)\right) u-a|u|^{p-2} u=\omega u, ~~~~~~~~x\in \R^N
\end{align}
in the following sense
\[  \text{Re} \Big[\frac{1}{2}  \int_{{\R^N}} {\nabla u\nabla \bar{\varphi} }  + \int_{{\R^N}} {\frac{1}{2}|x|^2u\bar{\varphi} }-\int_{{\R^N}} \bar{\varphi}(\Omega \cdot L) u-a\int_{{\R^N}}|u|^{p-2} u\bar{\varphi}-\omega \int_{{\R^N}} {u\bar{\varphi} }\Big]=0,~~~~\forall\varphi \in \Sigma.\]
To study the time-dependent equation \eqref{1.1}, we shall concern firstly the stationary equation \eqref{1.3}. Physicists usually call this type of solution $u$ a ``normalized solution" to \eqref{1.3}. This fact implies that $\omega$ cannot be determined a priori, but is part of the unknown. Normalized solutions to \eqref{1.3} can be obtained by searching critical points of the energy functional
\begin{align}\label{1.4}
 \dis I(u):= \frac{1}{2}\int_{\mathbb{R}^{N}}|\nabla u|^{2}+\frac{1}{2}\int_{\mathbb{R}^{N}}|x|^2|u|^{2}
 -\frac{2a}{p}\int_{\mathbb{R}^{N}}|u|^{p}-\int_{\mathbb{R}^{N}} \bar{u}(\Omega \cdot L) u d x
\end{align}
on the constraint
\begin{align}\label{1.5}
    S(c) = \{ u \in \Sigma:{\left\| u \right\|_{{2}}^2} = c\}
\end{align}
with Lagrange multipliers $\omega$. 

Recently, J. Arbunich et al. in \cite{JeSp} concerned the existence,  stability and instability properties of the standing waves to \eqref{1.1} with $\dis V(x)=\sum_{j=1}^{N}\frac{\gamma_{j}^{2} x_{j}^{2}}{2}$. They assumed that
\begin{align*}
&(i)~~~~a<0,~~~~|\Omega|<\min_{1\leq j\leq N}  \gamma_{j},~~~~2< p < 2^*; ~~~~(ii)~~~~a>0,~~~~|\Omega|<\min_{1\leq j\leq N}  \gamma_{j},~~~~2< p < 2+\frac{4}{N}.
\end{align*}
Under these hypotheses, the authors in \cite{JeSp} studied the global minimization problem
\begin{align}\label{1.6}
E(c): = \mathop {\inf }\limits_{u \in S(c)} I(u).
\end{align}
They proved the relative compactness of any minimizing sequence for \eqref{1.6} and hence obtained the existence of minimizers and stability of  ${\mathcal{M} }_c$, where
\begin{align}\label{1.8}
{{\mathcal{M} }_c}: = \{ u \in S(c):I(u) =E(c)\}.
\end{align}
Whether the minimizer of \eqref{1.6} is radially symmetric or not is still unknown. When $|\Omega|=0$ and $V(x)\!=\!\frac{|x|^2}{2}$, the results of \cite{Mark,MhiO} indicated that the minimizer of \eqref{1.6} is radially symmetric. In \cite{EiRi,eiRi,IgMv}, a symmetry breaking result for the energy minimizers was proved for $|\Omega|$ above a certain critical
speed ${\Omega}_{crit}>0$. Furthermore, an estimate for ${\Omega}_{crit}$ in $N=2$ can be found in \cite{IgMv}.

Notice that $\bar{p}:=\!2\!+\!\frac{4}{N}$ is the $L^2$-critical or mass-critical  exponent for problem \eqref{1.6} since $E(c)\!>\!-\infty$ if $p\!\in\!(2,\bar{p})$ and $E(c)\!=\!-\infty$ if $p\!\in\!(\bar{p},2^*)$. Indeed, for fixed $u \!\in\! S(c)$, we have $u_\tau(x)\!=\!\tau^{\frac{N}{2}}u(\tau x)\!\in\! S(c)$ and
\begin{align*}
\dis I(u_\tau)\!=\! \frac{\tau^2}{2}   {\| {\nabla u} \|}_2^2 \! +\! \frac{1}{2\tau^2}{\| xu \|}_2^2\!-\! \frac{{2a}}{p}\tau^{p\delta_p}  {{\| u \|}_p^p}\!-\!\int_{{\R^N}} \bar{u}(\Omega \cdot L) u\!\to\!-\infty~~~~\mbox{as}~~~~\tau\to+\infty,
\end{align*}
where
\begin{align}\label{delta_p}
\delta_p=\frac{N(p-2)}{2p}.
\end{align}

Y. J. Guo et al. in \cite{gYlW} considered the two-dimensional attractive BEC (i.e. $N\!=\!2$, $a\!>\!0$ and $p\!=\!4$ in \eqref{1.3}) in a general trap $V(x)$ satisfying $0 \!\leq\! V(x) \!\in\! L_{loc}^{\infty}(\R^{2})$ and $\dis \varliminf_{|x| \rightarrow \infty} \frac{V(x)}{|x|^{2}}\!>\!0$, which falls in the $L^2$-critical case. They proved that there exists $\Omega^{*}\!>\!0$ and $a^*\!>\!0$ such that
$(i)$~~~~if $0\!\leq\!|\Omega|\!<\!\Omega^{*}$ and $0\!\leq\! a\!<\!a^*$, there exists at least one global minimizer;
$(ii)$~~~~if $0\!\leq\!|\Omega|\!<\!\Omega^{*}$ and $a\!\geq\! a^*$, there is no global minimizer; $(iii)$~~~~if $|\Omega|\!>\!\Omega^{*}$ and $a\!>\!0$, there is no global minimizer.
The authors also analyzed the limit behavior and mass concentration of the global minimizers as a $a\nearrow a^*$ if $0<|\Omega|<\Omega^{*}$.

%


\vspace {.25cm}

To our best knowledge, the existence and stability of standing waves to \eqref{1.1} with $\bar{p}<p<2^*$ is still unknown. Much attention should be paid to this case since it contains the physically most relevant case $p=4$, $N=3$.

Since $E(c)=-\infty$ for $p\in(\bar{p},2^*)$, the global minimization method adopted in \eqref{1.6} does not work. Furthermore, due to the existence of the trapping potential $\frac{1}{2}|x|^2$, it seems not applicable to find a critical point of $I\left| {_{S(c)}} \right.$ by minimizing $I$ on a constructed submanifold of $S(c)$ as \cite{ll} does. Motivated by \cite{BNlv,bj}, 
we study a local minimization problem: for any given $r>0$, define
\begin{align}\label{1.9}
m_c^r: = \mathop {\inf }\limits_{u \in S(c)\cap B(r)} I(u),
\end{align}
where
$$B(r)=\Big\{u\in \Sigma: \|u\|_{\dot{\Sigma}}^2=\left\| {\nabla u} \right\|_2^2 +\left\| xu \right\|_2^2\leq r\Big\}.$$
For any fixed $r\!>\!0$, it is clear that $m_c^r\!>\!-\infty$ if $S(c)\cap B(r)\!\neq\! \emptyset$. We will claim that $S(c)\cap B(r)\!\neq\! \emptyset$ (see Lemma \ref{lem2.2+}) and $m_c^r$ is achieved. Once the claim is true, we have
\begin{align}\label{1.9++}
{\mathcal{M}_c^r}: = \{ u \in S(c)\cap B(r):I(u) = m_c^r\}\neq \emptyset.
\end{align}
After excluding the possibility of the minimizers locating on the boundary of $S(c)\cap B(r)$, then the minimizer of $m_c^r$ is indeed a critical point of $I\left| {_{S(c)}} \right.$ as well as a normalized solution to \eqref{1.3}. The main results in this aspect are stated as follows.

%
%
%
%
%

\begin{theorem} \label{the1.5}
Let $a \!>\!0$, $N\!=\!2,3$, $2\!+\!\frac{4}{N}\!\leq\!p\!<\!2^*$ and $0\!<\!|\Omega|\!<\!1$. For any fixed $r\!>\!0$, we could find some $c_0:=\!c_0(r,a,|\Omega|)\!>\!0$ such that for any $c\!<\!c_0$, there exist $(u_{c},\omega_{c})\!\in\! \Sigma \!\times\! \R $ such that $u_c\!\in\! {\mathcal{M}_c^r}$ and $u_{c}$ weakly solves \eqref{1.3} with $\omega=\omega_{c}$. Furthermore,
$$  N\Big(\frac{1-{|\Omega|}^2}{2(1+3|\Omega|)}-a \mathcal{C}^p_{N,p} r^{\frac{p\delta_p-2}{2}} c^{\frac{p(1-\delta_p)}{2}}\Big)\!\leq\!\omega_c\!<\!
\frac{N}{2}  $$
and
$$  \mathop {\sup }\limits_{u \in {\mathcal{M}_c^r}} \left\| {u - l_0{\psi _0}} \right\|_{ \Sigma }^2=O(c+c^{\frac{p(1-\delta_p)}{2}}),$$ 
where $\delta_p\!=\!\frac{N(p-2)}{2p}$, $\mathcal{C}_{N,p}$ is some positive constant, ${\psi _0}$ is the unique normalized positive eigenvector of the harmonic oscillator $-\Delta\!+\!|x|^2$ and $l_0\!=\!\int_{{\R^N}}{u\psi _0}$.  \end{theorem}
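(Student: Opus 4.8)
The plan is to realize $u_c$ as a constrained minimizer of $I$ on $S(c)\cap B(r)$, to show that it in fact lies in the open ball $\{\|u\|_{\dot\Sigma}^2<r\}$ so that the ball constraint is inactive and $u_c$ becomes a genuine critical point of $I|_{S(c)}$, to read off $\omega_c$ from the Euler--Lagrange equation, and finally to describe the minimizers for small $c$ via their $L^2$-projection onto $\psi_0$.

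\emph{Existence of a minimizer and interiority.} For $c\le r/N$ the rescaled oscillator ground state $\sqrt c\,\psi_0$ belongs to $S(c)\cap B(r)$, since $\|\sqrt c\,\psi_0\|_{\dot\Sigma}^2=Nc$; hence this set is nonempty (Lemma~\ref{lem2.2+}). On it, Gagliardo--Nirenberg together with $p\delta_p\ge2$ gives $\|u\|_p^p\le\mathcal C_{N,p}^p\|\nabla u\|_2^{p\delta_p}c^{\frac{p(1-\delta_p)}{2}}\le\mathcal C_{N,p}^p\,r^{\frac{p\delta_p-2}{2}}c^{\frac{p(1-\delta_p)}{2}}\|u\|_{\dot\Sigma}^2$, while $\bigl|\int\bar u(\Omega\cdot L)u\bigr|\le\frac{|\Omega|}{2}\|u\|_{\dot\Sigma}^2$; so, after shrinking to $c<c_0(r,a,|\Omega|)$, one has $I(u)\ge\frac{1-|\Omega|}{4}\|u\|_{\dot\Sigma}^2$ there, in particular $m_c^r>-\infty$. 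A minimizing sequence is then bounded in $\Sigma$; along a weakly convergent (in $\Sigma$) subsequence, the embedding $\Sigma\hookrightarrow L^q(\R^N)$ being compact for $2\le q<2^*$ gives $\|u_n\|_p^p\to\|u\|_p^p$ and $\|u\|_2^2=c$, the quadratic form $u\mapsto\frac12\|u\|_{\dot\Sigma}^2-\mathrm{Re}\int\bar u(\Omega\cdot L)u$ is that of the self-adjoint operator $-\frac12\Delta+\frac12|x|^2-\Omega\cdot L$, which for $|\Omega|<1$ is positive (indeed $\ge\frac N2$, with $\psi_0$ its unique ground state), hence convex and weakly lower semicontinuous, and $\|u\|_{\dot\Sigma}^2\le\liminf\|u_n\|_{\dot\Sigma}^2\le r$; therefore $u\in S(c)\cap B(r)$ and $I(u)=m_c^r$, so $\mathcal M_c^r\neq\emptyset$. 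The decisive point is that no minimizer touches the sphere $\{\|u\|_{\dot\Sigma}^2=r\}$: on it $I(u)\ge\frac{1-|\Omega|}{2}r-\frac{2a}{p}\mathcal C_{N,p}^p\,r^{\frac{p\delta_p}{2}}c^{\frac{p(1-\delta_p)}{2}}\to\frac{1-|\Omega|}{2}r>0$ as $c\to0$, whereas $m_c^r\le I(\sqrt c\,\psi_0)=\frac{Nc}{2}-\frac{2a}{p}c^{p/2}\|\psi_0\|_p^p<\frac{Nc}{2}$ because the rotation term of the radial $\psi_0$ vanishes. Shrinking $c_0$ so that $\frac{Nc}{2}$ lies below the boundary infimum for all $c<c_0$ forces $\|u_c\|_{\dot\Sigma}^2<r$ for every $u_c\in\mathcal M_c^r$; thus $u_c$ is a local minimizer of $I|_{S(c)}$, and the Lagrange multiplier rule furnishes $\omega_c\in\R$ with $u_c$ weakly solving \eqref{1.3}.

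\emph{Estimate of $\omega_c$.} Testing \eqref{1.3} with $u_c$ gives $\omega_c c=\frac12\|u_c\|_{\dot\Sigma}^2-\mathrm{Re}\int\bar u_c(\Omega\cdot L)u_c-a\|u_c\|_p^p=I(u_c)-\frac{a(p-2)}{p}\|u_c\|_p^p<I(u_c)=m_c^r\le\frac{Nc}{2}$, whence $\omega_c<\frac N2$. For the lower bound, the coercivity above and $m_c^r\le\frac{Nc}{2}$ yield the a priori estimate $\|u_c\|_{\dot\Sigma}^2\le Cc$, which fed back into Gagliardo--Nirenberg gives $\|u_c\|_p^p\le C r^{\frac{p\delta_p-2}{2}}c^{\,1+\frac{p(1-\delta_p)}{2}}$; combining with the coercivity estimate $\frac12\|u_c\|_{\dot\Sigma}^2-\mathrm{Re}\int\bar u_c(\Omega\cdot L)u_c\ge\frac{1-|\Omega|^2}{2(1+3|\Omega|)}\|u_c\|_{\dot\Sigma}^2$ (proved by completing the square in the rotation term and using $|x|^2\ge x_1^2+x_2^2$) and then with the uncertainty bound $\|u_c\|_{\dot\Sigma}^2\ge N\|u_c\|_2^2=Nc$, and dividing by $c$, one arrives at exactly the stated lower bound for $\omega_c$.

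\emph{Asymptotics.} Fix $u\in\mathcal M_c^r$ and write $u=l_0\psi_0+w$ with $l_0=\int u\psi_0$, so $\langle w,\psi_0\rangle_{L^2}=0$. Since $\psi_0$ is radial, $(\Omega\cdot L)\psi_0=0$, hence $\int\bar u(\Omega\cdot L)u=\int\bar w(\Omega\cdot L)w$; since $\psi_0$ is an eigenfunction of $-\Delta+|x|^2$, the splitting is orthogonal in $\|\cdot\|_{\dot\Sigma}$ too, giving $|l_0|^2+\|w\|_2^2=c$ and $N|l_0|^2+\|w\|_{\dot\Sigma}^2=\|u\|_{\dot\Sigma}^2$. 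Inserting these into $I(u)=m_c^r\le\frac{Nc}{2}$ and using $|l_0|^2=c-\|w\|_2^2$ leaves $\frac12\|w\|_{\dot\Sigma}^2-\mathrm{Re}\int\bar w(\Omega\cdot L)w-\frac N2\|w\|_2^2\le\frac{2a}{p}\|u\|_p^p$. The left side equals $\frac12\langle(-\Delta+|x|^2-2\Omega\cdot L)w,w\rangle-\frac N2\|w\|_2^2$ and is $\ge\kappa\|w\|_{\dot\Sigma}^2$ with $\kappa=\kappa(N,|\Omega|)>0$: on $\psi_0^{\perp}$ the rotating oscillator obeys $-\Delta+|x|^2-2\Omega\cdot L\ge N+2-2|\Omega|$ (the first excited eigenvalue $N+2$ of $-\Delta+|x|^2$ is lowered by at most $2|\Omega|$ by the rotation), and $N+2-2|\Omega|>N$ when $|\Omega|<1$, so interpolating this spectral gap against the elementary estimate $-\Delta+|x|^2-2\Omega\cdot L\ge(1-|\Omega|)(-\Delta+|x|^2)$ produces the coercive bound with $\kappa=\frac{(1-|\Omega|)^2}{N+2-2|\Omega|}$. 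Hence $\|w\|_\Sigma^2\le(1+\tfrac1{N+2})\|w\|_{\dot\Sigma}^2\le\kappa^{-1}\tfrac{2a}{p}(1+\tfrac1{N+2})\|u\|_p^p\le C r^{\frac{p\delta_p-2}{2}}c^{\,1+\frac{p(1-\delta_p)}{2}}=O\bigl(c+c^{\frac{p(1-\delta_p)}{2}}\bigr)$ uniformly in $u\in\mathcal M_c^r$ (indeed with the sharper decay $c^{1+p(1-\delta_p)/2}$); since then $|l_0|^2=c-o(c)$, the minimizers collapse onto the Gaussian, $u-l_0\psi_0\to0$ in $\Sigma$ faster than $\sqrt c$ as $c\to0$.

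\emph{Where the difficulty lies.} The load-bearing step is the interiority argument: one must make the test-function upper bound on $m_c^r$ and the coercivity lower bound on the sphere $\{\|u\|_{\dot\Sigma}^2=r\}$ quantitative enough that, for small $c$, the inner infimum strictly beats the boundary infimum --- only then is the ball constraint inactive and $u_c$ an honest solution of \eqref{1.3}. Once this is secured, the remaining steps reduce to standard use of the compact Sobolev embeddings and of the spectral structure of $-\Delta+|x|^2-2\Omega\cdot L$.
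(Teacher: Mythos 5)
Your argument is correct, and for the existence, interiority and multiplier estimates it follows essentially the same route as the paper: direct minimization on $S(c)\cap B(r)$ using the compact embedding $\Sigma\hookrightarrow L^q$ (Lemma~\ref{Compact}), weak lower semicontinuity of the quadratic part, and the observation that for small $c$ the test function $\sqrt{c}\,\psi_0$ pushes $m_c^r$ strictly below the infimum of $I$ on the sphere $\{\|u\|_{\dot\Sigma}^2=r\}$, so the ball constraint is inactive (the paper packages this as the two-radius inequality of Proposition~\ref{prop2.3}, which it also reuses later for stability, but the content is the same). The genuine divergence is in the last step. The paper expands $u$ in the Hermite basis and simply bounds $\sum_{k\ge1}\lambda_k|l_k|^2\le\sum_{k\ge0}\lambda_k|l_k|^2$ by $C_*(\Omega)^{-1}\bigl(\tfrac{Nc}{2}+\tfrac{2a}{p}\mathcal{C}^p_{N,p}r^{p\delta_p/2}c^{p(1-\delta_p)/2}\bigr)$, which costs nothing spectrally but only yields $O(c+c^{p(1-\delta_p)/2})$. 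You instead subtract the $N|l_0|^2$ contribution exactly and control the remainder $w\perp\psi_0$ by a spectral-gap coercivity of $-\Delta+|x|^2-2\,\Omega\cdot L$ on $\psi_0^{\perp}$ (valid since on the $\lambda_k$-eigenspace the angular momentum is at most $k$, so the infimum there is $N+2k(1-|\Omega|)\ge N+2-2|\Omega|>N$). This buys the sharper rate $O(c^{1+p(1-\delta_p)/2})$, which of course implies the stated $O(c+c^{p(1-\delta_p)/2})$; the price is the extra spectral input about the rotating oscillator, which you state a little informally ("lowered by at most $2|\Omega|$" is literally false for higher levels, though the infimum over $\psi_0^{\perp}$ you need is correct). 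One small inaccuracy worth fixing: in the lower bound for $\omega_c$ you claim to recover "exactly" the stated constant by feeding $\|u_c\|_{\dot\Sigma}^2\le Cc$ into Gagliardo--Nirenberg; to get the precise constant $N\bigl(C_*(\Omega)-a\mathcal{C}^p_{N,p}r^{(p\delta_p-2)/2}c^{p(1-\delta_p)/2}\bigr)$ one should instead factor $\|u_c\|_{\dot\Sigma}^2$ out of both terms, check the bracket is positive for $c<c_0$, and only then apply $\|u_c\|_{\dot\Sigma}^2\ge Nc$, as the paper does; your route gives a bound of the same form but with an unspecified constant in front of the $c^{p(1-\delta_p)/2}$ correction.
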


Next, we show that $u_c $ is a normalized ground state if $c>0$ is sufficiently small and concern the asymptotic behavior of $u_c$ as $c\to 0^+$. Following \cite{bj}, we say that $u_c\in S(c)$ is a normalized ground state solution to problem \eqref{1.3} if
\[ I^{'}|_{S(c)}(u_c)=0 \text{ and } I(u_c)=\inf\{I(u): u\in S(c), \text{ }I^{'}|_{S(c)}(u)=0\}.\]


\begin{theorem} \label{Asymtotic}
Assume that $a \!>\!0$, $N\!=\!2,3$, $2\!+\!\frac{4}{N}\!<\!p\!<\!2^*$ and $0\!<\!|\Omega|\!<\!\sqrt{1\!-\!(\frac{2}{p\delta_p})^2}$. Let $(u_{c},\omega_{c})\!\in\! {\mathcal{M}_c^r} \!\times\! \R $ be given by Theorem \ref{the1.5} and $I(u_c)=m_c^r$. Then, $u_{c}$ is a normalized ground state to \eqref{1.3} provided $c>0$ is  sufficiently small. Furthermore, $u_{c}\to 0$ in $\dot{\Sigma}$ as $c\to 0^+$, $\mathop {\lim }\limits_{c\to 0^+}\frac{m_c^r}{c}=\mathop {\lim }\limits_{c\to 0^+}\omega_c=\omega$ for some $\omega\in[ \frac{(1-{|\Omega|}^2)N}{2(1+3|\Omega|)}, \frac{N}{2}]$ and
\[
 \dis\mathop {\lim }\limits_{c\to 0^+}\frac{\|\nabla u_{c}\|_2^2 \!-\! \dis\int_{\mathbb{R}^{N}}\bar{u}_{c}(\Omega \cdot L) u_{c}   }{c}=\mathop {\lim }\limits_{c\to 0^+} \frac{\|x u_{c}\|_2^2 \!-\!  \dis\int_{\mathbb{R}^{N}}\bar{u}_{c}(\Omega \cdot L) u_{c}}{c}=\omega.\]
\end{theorem}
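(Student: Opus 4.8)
The plan is to first extract quantitative decay from Theorem~\ref{the1.5}. Since $\psi_0$ is $L^2$-normalized, $\big|\text{Re}\int\bar u_c\psi_0\big|\le\|u_c\|_2\|\psi_0\|_2=\sqrt c$, so $\|l_0\psi_0\|_\Sigma^2\le c\|\psi_0\|_\Sigma^2$, and together with $\sup_{\mathcal{M}_c^r}\|u-l_0\psi_0\|_\Sigma^2=O(c+c^{p(1-\delta_p)/2})$ this gives $\|u_c\|_\Sigma\to0$, in particular $u_c\to0$ in $\dot{\Sigma}$. To sharpen this I would test $m_c^r=I(u_c)$ against the competitor $\sqrt c\,\psi_0\in S(c)\cap B(r)$ (admissible once $Nc\le r$; its rotation term vanishes because $\psi_0$ is radial, while $\|\psi_0\|_{\dot{\Sigma}}^2=N\|\psi_0\|_2^2=N$), getting $m_c^r\le\tfrac N2c-\tfrac{2a}{p}c^{p/2}\|\psi_0\|_p^p$. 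Comparing this with $I(u_c)\ge\tfrac{1-|\Omega|}{2}\|u_c\|_{\dot{\Sigma}}^2-\tfrac{2a}{p}\|u_c\|_p^p$ (using $\big|\text{Re}\int\bar u_c(\Omega\cdot L)u_c\big|\le\tfrac{|\Omega|}{2}\|u_c\|_{\dot{\Sigma}}^2$) and the Gagliardo--Nirenberg inequality $\|u_c\|_p^p\le\mathcal{C}_{N,p}^p\|u_c\|_{\dot{\Sigma}}^{p\delta_p}c^{p(1-\delta_p)/2}$, and recalling $p\delta_p>2$ since $p>2+\tfrac4N$, a short bootstrap upgrades the decay to $\|u_c\|_{\dot{\Sigma}}^2=O(c)$; hence $\|u_c\|_p^p=O(c^{p/2})$ and $\|u_c\|_p^p/c\to0$.

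Next I would derive two identities. Pairing \eqref{1.3} with $\bar u_c$ and taking real parts gives $\tfrac12\|u_c\|_{\dot{\Sigma}}^2-\text{Re}\int\bar u_c(\Omega\cdot L)u_c-a\|u_c\|_p^p=\omega_c c$; subtracting it from $I(u_c)=m_c^r$ gives $m_c^r-\omega_c c=\tfrac{a(p-2)}{p}\|u_c\|_p^p$, so $\tfrac{m_c^r}{c}-\omega_c\to0$ by the previous step. Since $u_c$ is a constrained critical point of $I$ on $S(c)$, differentiating $\tau\mapsto I\big(\tau^{N/2}u_c(\tau\,\cdot)\big)$ at $\tau=1$ must give $0$ (the curve lies on $S(c)$), i.e.\ the Pohozaev identity $\|\nabla u_c\|_2^2-\|xu_c\|_2^2=2a\delta_p\|u_c\|_p^p$ holds, whence $\tfrac1c\big(\|\nabla u_c\|_2^2-\|xu_c\|_2^2\big)\to0$. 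Feeding these back, both $\tfrac1c\big(\|\nabla u_c\|_2^2-\text{Re}\int\bar u_c(\Omega\cdot L)u_c\big)$ and $\tfrac1c\big(\|xu_c\|_2^2-\text{Re}\int\bar u_c(\Omega\cdot L)u_c\big)$ equal $\omega_c+o(1)$; thus the last two displayed limits of the theorem, together with $\lim m_c^r/c=\lim\omega_c$, all follow once $\lim\omega_c$ is known.

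To compute $\lim\omega_c$, let $Q(u):=\tfrac12\|u\|_{\dot{\Sigma}}^2-\text{Re}\int\bar u(\Omega\cdot L)u$ be the quadratic form of $\mathcal{L}:=-\tfrac12\Delta+\tfrac12|x|^2-(\Omega\cdot L)$. The first identity reads $\omega_c c=Q(u_c)-a\|u_c\|_p^p$, so it suffices to prove the sharp bound $Q(u)\ge\tfrac N2\|u\|_2^2$ for all $u\in\Sigma$: then $\omega_c\ge\tfrac N2-a\|u_c\|_p^p/c$, which together with $\omega_c<\tfrac N2$ (Theorem~\ref{the1.5}) and $\|u_c\|_p^p/c\to0$ forces $\omega_c\to\tfrac N2$. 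This bound is a spectral fact: writing $L_3=-i(x_1\partial_{x_2}-x_2\partial_{x_1})$ so that $\Omega\cdot L=|\Omega|L_3$, the operator $\mathcal{L}$ preserves each sector $\mathcal{H}_m$ on which $L_3\equiv m$, where it acts as $-\tfrac12\Delta+\tfrac12|x|^2-|\Omega|m$, whose lowest eigenvalue is $|m|+\tfrac N2-|\Omega|m\ge\tfrac N2$ because $|\Omega|<1$ (with equality at $m=0$, realized by $\psi_0$); summing the $Q$-orthogonal sectors gives the claim. Hence $\omega_c\to\tfrac N2$, so $m_c^r/c\to\tfrac N2$ and the two final limits of the theorem equal $\tfrac N2\in[\tfrac{(1-|\Omega|^2)N}{2(1+3|\Omega|)},\tfrac N2]$. (Alternatively one passes to the weak limit $u_c/\sqrt c\rightharpoonup V$ in $\Sigma$, checks $\|V\|_2=1$ and $\mathcal{L}V=(\lim\omega_c)V$, and concludes via Theorem~\ref{the1.5} that $\lim\omega_c$ is the unique eigenvalue of $\mathcal{L}$ in that interval.)

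Finally, to show $u_c$ is a normalized ground state, let $v\in S(c)$ be any critical point of $I|_{S(c)}$. It satisfies the same dilation identity $\|\nabla v\|_2^2-\|xv\|_2^2=2a\delta_p\|v\|_p^p$, and substituting $\|v\|_p^p=\tfrac1{2a\delta_p}\big(\|\nabla v\|_2^2-\|xv\|_2^2\big)$ into $I(v)$, together with $\big|\text{Re}\int\bar v(\Omega\cdot L)v\big|\le|\Omega|\,\|\nabla v\|_2\|xv\|_2$, yields
\[
I(v)\ \ge\ \Big(\tfrac12-\tfrac1{p\delta_p}\Big)\|\nabla v\|_2^2+\Big(\tfrac12+\tfrac1{p\delta_p}\Big)\|xv\|_2^2-|\Omega|\,\|\nabla v\|_2\,\|xv\|_2 .
\]
The quadratic form on the right in the variables $(\|\nabla v\|_2,\|xv\|_2)$ is positive definite exactly when $|\Omega|^2<1-(\tfrac{2}{p\delta_p})^2$, which is our hypothesis; so $I(v)\ge\mu\|v\|_{\dot{\Sigma}}^2$ for some $\mu=\mu(N,p,|\Omega|)>0$ and every normalized critical point $v$. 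If $v\in B(r)$ then $I(v)\ge m_c^r=I(u_c)$ by definition of $m_c^r$; if $v\notin B(r)$ then $I(v)>\mu r>m_c^r=I(u_c)$ for $c$ small, since $m_c^r\to0$. Hence $I(u_c)=\inf\{I(v):v\in S(c),\ I^{'}|_{S(c)}(v)=0\}$, i.e.\ $u_c$ is a normalized ground state for small $c$. The main obstacles I anticipate are the bootstrap to $\|u_c\|_{\dot{\Sigma}}^2=O(c)$ in the first step (needed so that $\|u_c\|_p^p/c\to0$ even for $p$ close to $2^*$ when $N=3$), the sharp quadratic-form bound $Q(u)\ge\tfrac N2\|u\|_2^2$ in the third step, and the recognition that the positivity threshold of the displayed quadratic form is exactly $|\Omega|<\sqrt{1-(\tfrac{2}{p\delta_p})^2}$.
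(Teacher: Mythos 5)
Your proposal is correct and reaches every conclusion of the theorem, but it diverges from the paper's route in two places, one of which actually yields a sharper result. The paper obtains the decay $\|u_c\|_{\dot\Sigma}^2=O(c)$ in one line: since $u_c$ solves \eqref{1.3}, the Pohozaev identity $Q(u_c)=0$ (Proposition \ref{pohozave}) lets one rewrite $I(u_c)$ as $\big(\tfrac12-\tfrac1{p\delta_p}\big)\|\nabla u_c\|_2^2+\big(\tfrac12+\tfrac1{p\delta_p}\big)\|xu_c\|_2^2-\int\bar u_c(\Omega\cdot L)u_c$, which under $|\Omega|<\sqrt{1-(2/(p\delta_p))^2}$ dominates $\mathcal C_\Omega\|u_c\|_{\dot\Sigma}^2$, so $\|u_c\|_{\dot\Sigma}^2\le m_c^r/\mathcal C_\Omega<Nc/(2\mathcal C_\Omega)$; your Gagliardo--Nirenberg bootstrap starting from the $\psi_0$-expansion estimate of Theorem \ref{the1.5} reaches the same $O(c)$ bound (the iteration $\alpha\mapsto\alpha\,p\delta_p/2+p(1-\delta_p)/2$ is expansive because $p\delta_p>2$), just less directly. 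The genuinely different step is your identification of the limit: the paper only squeezes $\omega_c$ between $N\big(\tfrac{1-|\Omega|^2}{2(1+3|\Omega|)}-o(1)\big)$ and $\tfrac N2$ and concludes that some subsequential limit $\omega$ lies in that interval (Remark 1.1 even calls the limit ``inaccurate''), whereas your spectral lower bound $\tfrac12\|u\|_{\dot\Sigma}^2-\mathrm{Re}\int\bar u(\Omega\cdot L)u\ge\tfrac N2\|u\|_2^2$ — valid for $|\Omega|<1$ because the lowest eigenvalue of $-\tfrac12\Delta+\tfrac12|x|^2-|\Omega|L_3$ in the sector $L_3=m$ is $\tfrac N2+|m|-|\Omega|m\ge\tfrac N2$ — combined with $\omega_c<\tfrac N2$ and $\|u_c\|_p^p/c\to0$ pins down $\omega=\tfrac N2$ for the full family, with no subsequence extraction. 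That is a strictly stronger conclusion than the paper's and the argument is sound. Your ground-state argument (Pohozaev substitution, the observation that the resulting quadratic form in $(\|\nabla v\|_2,\|xv\|_2)$ is positive definite precisely when $|\Omega|^2<1-(2/(p\delta_p))^2$, then the dichotomy $v\in B(r)$ versus $v\notin B(r)$) is the paper's argument in contrapositive form. One small caveat: deriving the Pohozaev identity by formally differentiating $\tau\mapsto I(\tau^{N/2}u_c(\tau\cdot))$ at $\tau=1$ requires justification, since the generator $\tfrac N2 u_c+x\cdot\nabla u_c$ of the dilation need not lie in $\Sigma$; the paper proves the identity rigorously in Proposition \ref{pohozave} by pairing the equation with $x\cdot\nabla\bar v$ and handling the rotation term separately, and you should route through that statement rather than the scaling heuristic.
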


\noindent \textbf{Remark 1.1} Theorem \ref{the1.5} implies that the standing wave $\psi_c(t,x) \!=\! e^{-i \omega_c t} u_c(x)$ of \eqref{1.1} behaves like the first eigenvector of the harmonic oscillator for small $c>0$. Theorem \ref{Asymtotic} describes a mass collapse behavior of the minimizers $u_{c} \in {\mathcal{M}_c^r}$. It indicates that $u_{c}\to 0$ in $\dot{\Sigma}$ with $\|\nabla u_{c}\|_2^2\!-\!\int_{\mathbb{R}^{N}}\bar{u}_{c}(\Omega \cdot L) u_{c}$ and $\|x u_{c}\|_2^2\!-\!\int_{\mathbb{R}^{N}}\bar{u}_{c}(\Omega \cdot L) u_{c}$ converging to $0$ at the same rate, and the corresponding frequency $\omega_c$ converges to some $\omega$ as $c\!\to\!0^+$. Due to the existence of the rotation term in \eqref{1.1}, the limit $\omega$ is inaccurate. When the rotation frequency $|\Omega|\!$ vanishes, we could get
$$\mathop {\lim }\limits_{c\to 0^+}\frac{m_c^r}{c}=\mathop {\lim }\limits_{c\to 0^+}\omega_c=\frac{N}{2},~~~~~~~~\mathop {\lim }\limits_{c\to 0^+} \frac{\|\nabla u_{c}\|_2^2}{c} =\mathop {\lim }\limits_{c\to 0^+}       \frac{\|x u_{c}\|_2^2}{c} =\frac{N}{2},$$
where we notice that $\frac{N}{2}$ is the first eigenvalue of $-\frac{1}{2}\Delta\!+\!\frac{1}{2}|x|^2$. \\



P. Antonelli et al. in \cite{aMsC} proved the local well-posedness of \eqref{1.1} in $\Sigma$ (See Lemma 3.1 of \cite{aMsC}), which states that for any $u_0 \!\in \!\Sigma$, there exists a $T\!>\!0$ and a unique solution $u\!\in\! C([0,T),\Sigma)$ of \eqref{1.1} with $u(0,\cdot)\!=\!u_0$. In addition, they proved that the mass and energy are preserved for all $t\!\in\! [0,T)$, where either $T\!=\!+\infty$ or $T\!<\!+\infty$ and $\lim_{t\to T^-}\|\nabla u\|_2\!=\!+\infty$. We say that a set $Y\subset\Sigma$ is \textbf{stable} under the flow associated with problem \eqref{1.1} if for any $\varepsilon>0$, there exists $\delta>0$ such that for any $u_0 \in \Sigma$ satisfying
\[\text{dist}_\Sigma(u_0,Y)<\delta,\]
the solution $u(t,\cdot)$ of problem \eqref{1.1} with $u(0,\cdot)=u_0$ satisfies
\[\dis \sup_{t\in [0,T)}\text{dist}_\Sigma(u(t,\cdot),Y)<\varepsilon,\]
where $T$ is the existence time for $u(t,\cdot)$. With these preliminaries, we then study the stability of ${\mathcal{M} }^r_c$.

\begin{theorem} \label{stability}
Let $a \!>\!0$, $N\!=\!2,3$, $2\!+\!\frac{4}{N}\!<\!p\!<\!2^*$ and $0\!<\!|\Omega|\!<\!1$. Then, ${\mathcal{M}_c^r}$ is stable under the flow corresponding to problem \eqref{1.1}.
\end{theorem}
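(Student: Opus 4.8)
The plan is to run the Cazenave--Lions compactness/contradiction scheme, adapted to the fact that $\mathcal M_c^r$ is a \emph{local} minimizer whose confinement is provided by the ball $B(r)$. Two structural facts drive everything, both already used in the paper to show that $m_c^r$ is attained by an interior minimizer (which is what turns the minimizer of \eqref{1.9} into a solution of \eqref{1.3} in Theorem \ref{the1.5}): (i) the embedding $\Sigma\hookrightarrow L^q(\R^N)$ is compact for every $q\in[2,2^*)$, the case $q=2$ included --- this is the effect of the harmonic trap $\frac{1}{2}|x|^2$; (ii) since $0<|\Omega|<1$, the quadratic part $Q(u):=\frac{1}{2}\|u\|_{\dot{\Sigma}}^2-\int_{\R^N}\bar u(\Omega\cdot L)u$ of $I$ is equivalent to $\|u\|_{\dot{\Sigma}}^2$, because $|\int_{\R^N}\bar u(\Omega\cdot L)u|\le|\Omega|\,\|\nabla u\|_2\|xu\|_2\le\frac{|\Omega|}{2}\|u\|_{\dot{\Sigma}}^2$ gives $\frac{1-|\Omega|}{2}\|u\|_{\dot{\Sigma}}^2\le Q(u)\le\frac{1+|\Omega|}{2}\|u\|_{\dot{\Sigma}}^2$, and moreover $Q$ is weakly lower semicontinuous on $\Sigma$ with the property that $Q(u_n)\to Q(u)$ along $u_n\rightharpoonup u$ upgrades to $u_n\to u$ in $\dot{\Sigma}$. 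The weak lower semicontinuity is clearest from the gauge identity $-\frac{1}{2}\Delta-\Omega\cdot L+\frac{1}{2}|x|^2=\frac{1}{2}|-i\nabla-A|^2+\frac{1}{2}\big((1-|\Omega|^2)(x_1^2+x_2^2)+x_3^2\big)$ with $A:=\Omega\wedge x$ (drop $x_3^2$ if $N=2$), which exhibits $Q$ as a magnetic Dirichlet energy plus a nonnegative weighted $L^2$-norm. In particular $I$ is continuous on $\Sigma$, $\mathcal M_c^r$ is compact in $\Sigma$, and $\rho_0:=\sup_{u\in\mathcal M_c^r}\|u\|_{\dot{\Sigma}}^2<r$.

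The one lemma I would isolate is a compactness statement for near-minimizers: if $v_n\in\Sigma$ satisfies $\|v_n\|_2^2\to c$, $\limsup_n\|v_n\|_{\dot{\Sigma}}^2\le r$ and $I(v_n)\to m_c^r$, then, along a subsequence, $v_n\to v$ in $\Sigma$ with $v\in\mathcal M_c^r$. Indeed $\{v_n\}$ is bounded in $\Sigma$ by (ii); passing to a weak limit $v$, compactness of $\Sigma\hookrightarrow L^2$ gives $\|v\|_2^2=c$, compactness of $\Sigma\hookrightarrow L^p$ gives $\int_{\R^N}|v_n|^p\to\int_{\R^N}|v|^p$, and weak lower semicontinuity of $Q$ gives $\|v\|_{\dot{\Sigma}}^2\le\liminf\|v_n\|_{\dot{\Sigma}}^2\le r$; hence $v\in S(c)\cap B(r)$ and $I(v)\le\liminf I(v_n)=m_c^r$, forcing $v\in\mathcal M_c^r$; subtracting the convergent nonlinear term from the convergent energies gives $Q(v_n)\to Q(v)$, hence $v_n\to v$ in $\dot{\Sigma}$ and, with $v_n\to v$ in $L^2$, in $\Sigma$. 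This is precisely the argument behind the achievement of $m_c^r$ in the paper. An immediate corollary is an energy gap, $\mu_c:=\inf\{I(u):u\in S(c),\ \|u\|_{\dot{\Sigma}}^2=r\}>m_c^r$: a boundary minimizing sequence would otherwise converge to some $v\in\mathcal M_c^r$ with $\|v\|_{\dot{\Sigma}}^2=r>\rho_0$, a contradiction.

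Now suppose $\mathcal M_c^r$ is not stable. Then there exist $\varepsilon_0>0$, data $\psi_{0,n}\in\Sigma$ with $\mathrm{dist}_\Sigma(\psi_{0,n},\mathcal M_c^r)\to0$, and times $t_n\in(0,T_n)$ with $\mathrm{dist}_\Sigma(\psi_n(t_n),\mathcal M_c^r)\ge\varepsilon_0$, where $\psi_n$ solves \eqref{1.1} with datum $\psi_{0,n}$ on its maximal interval $[0,T_n)$ (Lemma 3.1 of \cite{aMsC}). By continuity of $I$ on $\Sigma$ and conservation of mass and energy, $c_n:=\|\psi_n(t)\|_2^2=\|\psi_{0,n}\|_2^2\to c$ and $I(\psi_n(t))=I(\psi_{0,n})\to m_c^r$ for every $t\in[0,T_n)$, while $\|\psi_{0,n}\|_{\dot{\Sigma}}^2\le\rho_0+o(1)<r$ for large $n$. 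I claim $\|\psi_n(t)\|_{\dot{\Sigma}}^2<r$ on all of $[0,T_n)$: otherwise continuity of $t\mapsto\psi_n(t)$ in $\Sigma$ produces a first $\tau_n$ with $\|\psi_n(\tau_n)\|_{\dot{\Sigma}}^2=r$, so $v_n:=\psi_n(\tau_n)$ satisfies the hypotheses of the compactness lemma and, along a subsequence, converges in $\Sigma$ to some $v\in\mathcal M_c^r$ with $\|v\|_{\dot{\Sigma}}^2=\lim\|v_n\|_{\dot{\Sigma}}^2=r>\rho_0$ --- a contradiction (equivalently, $I(\psi_n)$ would have to rise to the gap level $\mu_c>m_c^r$, which it cannot since it stays $\to m_c^r$); this also gives $T_n=+\infty$ because $\|\nabla\psi_n(t)\|_2^2\le\|\psi_n(t)\|_{\dot{\Sigma}}^2<r$. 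Hence $\psi_n(t_n)\in S(c_n)\cap B(r)$ again satisfies the hypotheses of the compactness lemma, so a subsequence of $\psi_n(t_n)$ converges in $\Sigma$ to some $w\in\mathcal M_c^r$, giving $\mathrm{dist}_\Sigma(\psi_n(t_n),\mathcal M_c^r)\to0$ and contradicting $\ge\varepsilon_0$.

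The step I expect to be the real work is the confinement claim --- that the flow cannot leave $B(r)$ --- which is precisely where the local (as opposed to global) nature of the minimization is felt: it hinges on the interior property $\rho_0<r$ together with the strong compactness of near-minimizing sequences, hence on the compact Sobolev embedding furnished by the harmonic trap and on the coercivity and weak lower semicontinuity of $Q$, i.e. on the hypothesis $|\Omega|<1$. (The mass drift $c_n\ne c$ is harmless: the compact embedding $\Sigma\hookrightarrow L^2$ pins the mass of any weak limit to $\lim c_n$, or one may rescale $\psi_n(t_n)$ onto $S(c)$ at the cost of an $o(1)$ error in $I$.)
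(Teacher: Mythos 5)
Your proof is correct and follows essentially the same contradiction scheme as the paper: conservation of mass and energy, a first-exit-time argument showing the flow cannot reach $\partial B(r)$ because the energy there exceeds $m_c^r$, and then the precompactness of minimizing sequences for $m_c^r$ (established in the proof of Theorem \ref{the1.5}) to contradict $\mathrm{dist}_\Sigma(u_n(t_n,\cdot),\mathcal{M}_c^r)\ge\varepsilon_0$. The only differences are cosmetic refinements: you derive the boundary energy gap from compactness rather than quoting Proposition \ref{prop2.3} directly, and you treat the mass drift $\|u_n^0\|_2^2\to c$ explicitly where the paper disposes of it with a ``without loss of generality''.
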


\noindent \textbf{Remark 1.2}
In \cite{aMsC}, P. Antonelli et al. proved the global existence of solutions to \eqref{1.1} with $\dis V(x)=\sum_{j=1}^{N}\frac{\gamma_{j}^{2} x_{j}^{2}}{2}$ provided either $a<0$ and $2< p < 2^*$ or $a>0$ and $2< p < 2+\frac{4}{N}$. On the contrary, finite time blow-up of the solutions to \eqref{1.1} occurred in two cases: $(i)$ $a>0$, $(\Omega \cdot L)V(x)=0$ and $2+\frac{4}{N}\leq p<2^*$; $(ii)$ $a>0$, $(\Omega \cdot L)V(x)\not=0$, $|\Omega|<{\gamma}$ and $\dis 2+\frac{4}{N}\sqrt{\frac{{\gamma}^2}{{\gamma}^2-|\Omega|^2}}\leq p<2^*$, where $\dis {\gamma}:=\min_{1\leq j\leq N}  \gamma_{j}$. More recently, N. Basharat et al. in \cite{shYZ} also studied \eqref{1.1}. They obtained a sharp condition on the global existence and blowup of solutions to \eqref{1.1} for $p=\bar{p}$ and some blowup conditions for $\bar{p}<p<2^*$. Moreover, similar results were extended to  \eqref{1.1} with an inhomogeneous nonlinearity. Compared with the results in \cite{aMsC,shYZ}, we obtain a new stability result in Theorem \ref{stability}. \\

Recall that the solutions obtained in Theorem \ref{the1.5} are local minimizers of $I|_{S(c)}$ and $I|_{S(c)}$ is unbounded from below for $p\!\in\!(\bar{p},2^*)$. Motivated by \cite{bj}, by using the local minimizers in ${\mathcal{M}_c^r}$, we obtain a mountain pass critical point of $I|_{S(c)}$.
\begin{theorem} \label{the1.8}
Let $a \!>\!0$, $N\!=\!2,3$, $2\!+\!\frac{4}{N}\!<\!p\!<\!2^*$ and $0\!<\!|\Omega|\!<\!\sqrt{1\!-\!(\frac{2}{p\delta_p})^2}$. For any $c<c_0$,  there exist $(\hat{u}_c,\hat{\omega}_c)\!\in\! \Sigma \!\times\! \R $ such that $\hat{u}_c$ weakly solves \eqref{1.3} with $\omega=\hat{\omega}_c$ and $I({\hat{u}_c})>m_c^r$, where $c_0$ is defined by Theorem \ref{the1.5}.
\end{theorem}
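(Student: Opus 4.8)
The plan is to realize $\hat u_c$ as a constrained mountain pass critical point of $I$ on $S(c)$ lying strictly above the local minimum value $m_c^r$, adapting the scheme of \cite{bj} to the present rotational and harmonic setting.

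\emph{Step 1: mountain pass geometry.} Let $u_c\in\mathcal{M}_c^r$ be the local minimizer from Theorem \ref{the1.5}; since no minimizer of $m_c^r$ lies on the boundary of $S(c)\cap B(r)$, one has $\|u_c\|_{\dot{\Sigma}}^2<r$. Recall that the dilation $u\mapsto u_\tau=\tau^{N/2}u(\tau\cdot)$ maps $S(c)$ into itself, leaves $\int_{\mathbb{R}^N}\bar u(\Omega\cdot L)u$ invariant, and satisfies $\|\nabla u_\tau\|_2^2=\tau^2\|\nabla u\|_2^2$, $\|xu_\tau\|_2^2=\tau^{-2}\|xu\|_2^2$, $\|u_\tau\|_p^p=\tau^{p\delta_p}\|u\|_p^p$; since $p\delta_p>2$ for $p>2+\tfrac4N$, it follows that $I((u_c)_\tau)\to-\infty$ and $\|(u_c)_\tau\|_{\dot{\Sigma}}^2\to+\infty$ as $\tau\to+\infty$. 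Fix $\tau_0>1$ with $v_0:=(u_c)_{\tau_0}$ such that $I(v_0)<m_c^r$ and $\|v_0\|_{\dot{\Sigma}}^2>r$, and set
\[
\Gamma_c:=\bigl\{\gamma\in C([0,1],S(c)):\gamma(0)=u_c,\ \gamma(1)=v_0\bigr\},\qquad \hat m_c:=\inf_{\gamma\in\Gamma_c}\ \max_{t\in[0,1]}I(\gamma(t)).
\]
The dilation path witnesses $\Gamma_c\neq\emptyset$, hence $\hat m_c<+\infty$.

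\emph{Step 2: the gap $\hat m_c>m_c^r$.} If $u\in S(c)$ with $\|u\|_{\dot{\Sigma}}^2=r$, then using $\bigl|\int_{\mathbb{R}^N}\bar u(\Omega\cdot L)u\bigr|\le\tfrac{|\Omega|}2\|u\|_{\dot{\Sigma}}^2$ and the Gagliardo--Nirenberg inequality together with $\|\nabla u\|_2^2\le r$, $\|u\|_2^2=c$,
\[
I(u)=\frac r2-\frac{2a}p\|u\|_p^p-\int_{\mathbb{R}^N}\bar u(\Omega\cdot L)u\ \ge\ \frac{(1-|\Omega|)r}2-C\,r^{p\delta_p/2}c^{p(1-\delta_p)/2}.
\]
Since $|\Omega|<1$ this is bounded below by a positive constant for $c$ small, whereas $m_c^r\to0$ as $c\to0^+$ by Theorem \ref{Asymtotic}; hence, for $c<c_0$ (further decreasing $c_0$ if needed), $\mu_c^r:=\inf\{I(u):u\in S(c),\ \|u\|_{\dot{\Sigma}}^2=r\}>m_c^r$. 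Every $\gamma\in\Gamma_c$ has $\|\gamma(0)\|_{\dot{\Sigma}}^2<r<\|\gamma(1)\|_{\dot{\Sigma}}^2$, so by continuity $\|\gamma(t^*)\|_{\dot{\Sigma}}^2=r$ for some $t^*\in(0,1)$, whence $\max_tI(\gamma(t))\ge I(\gamma(t^*))\ge\mu_c^r$; taking the infimum gives $\hat m_c\ge\mu_c^r>m_c^r$. Thus, once $\hat m_c$ is attained, $I(\hat u_c)>m_c^r$, as required.

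\emph{Step 3: a bounded Palais--Smale sequence at level $\hat m_c$.} Because $I|_{S(c)}$ is unbounded below, PS sequences need not be bounded; I would therefore run the minimax for the augmented functional $\tilde I(s,u):=I(s\star u)$, $s\star u:=e^{Ns/2}u(e^s\cdot)$, over paths in $\mathbb{R}\times S(c)$ joining $(0,u_c)$ to $(0,v_0)$ — whose minimax level equals $\hat m_c$ via the correspondences $\gamma\mapsto(0,\gamma)$ and $(s,w)\mapsto s\star w$ — and obtain by a deformation argument a PS sequence $(s_n,w_n)$ for $\tilde I$. Then $u_n:=s_n\star w_n\in S(c)$ satisfies $I(u_n)\to\hat m_c$, $I'|_{S(c)}(u_n)\to0$, and the additional relation $Q(u_n)\to0$, where $Q(u):=\partial_s\tilde I(0,u)=\|\nabla u\|_2^2-\|xu\|_2^2-2a\delta_p\|u\|_p^p$. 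Using $Q(u_n)=o(1)$ to remove $\|u_n\|_p^p$ from $I(u_n)$,
\[
I(u_n)=\Bigl(\tfrac12-\tfrac1{p\delta_p}\Bigr)\|\nabla u_n\|_2^2+\Bigl(\tfrac12+\tfrac1{p\delta_p}\Bigr)\|xu_n\|_2^2-\int_{\mathbb{R}^N}\bar u_n(\Omega\cdot L)u_n+o(1),
\]
and with the sharp estimate $\bigl|\int_{\mathbb{R}^N}\bar u_n(\Omega\cdot L)u_n\bigr|\le|\Omega|\,\|\nabla u_n\|_2\|xu_n\|_2$ the form $\bigl(\tfrac12-\tfrac1{p\delta_p}\bigr)A+\bigl(\tfrac12+\tfrac1{p\delta_p}\bigr)B-|\Omega|\sqrt{AB}$ in $A=\|\nabla u_n\|_2^2,\ B=\|xu_n\|_2^2$ is positive definite precisely when $|\Omega|^2<4\bigl(\tfrac12-\tfrac1{p\delta_p}\bigr)\bigl(\tfrac12+\tfrac1{p\delta_p}\bigr)=1-\bigl(\tfrac2{p\delta_p}\bigr)^2$, i.e.\ under the standing hypothesis on $|\Omega|$. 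Hence $I(u_n)\ge\ep_0\|u_n\|_{\dot{\Sigma}}^2+o(1)$ for some $\ep_0>0$, and since $\|u_n\|_2^2=c$, the sequence $\{u_n\}$ is bounded in $\Sigma$.

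\emph{Step 4: compactness and conclusion.} By the compact embedding $\Sigma\hookrightarrow L^q(\mathbb{R}^N)$, $2\le q<2^*$, a subsequence satisfies $u_n\rightharpoonup\hat u_c$ in $\Sigma$ and $u_n\to\hat u_c$ in $L^2\cap L^p$, so $\hat u_c\in S(c)$; the Lagrange multipliers $\omega_n$ are bounded and, up to a subsequence, $\omega_n\to\hat\omega_c$. Testing the PS relation with $u_n$ and passing to the limit, together with the coercivity and weak lower semicontinuity of the quadratic form $u\mapsto\tfrac12\|u\|_{\dot{\Sigma}}^2-\int_{\mathbb{R}^N}\bar u(\Omega\cdot L)u$ (valid since $|\Omega|<1$), forces $u_n\to\hat u_c$ strongly in $\Sigma$; hence $\hat u_c$ weakly solves \eqref{1.3} with $\omega=\hat\omega_c$ and $I(\hat u_c)=\hat m_c>m_c^r$, which is the assertion. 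I expect the main obstacle to be Step 3: the boundedness of the PS sequence compels both Jeanjean's scaling device, to generate the Pohozaev-type relation $Q(u_n)\to0$, and the use of the sharp rather than the crude Cauchy--Schwarz bound for the rotation term, and it is exactly this that pins down the threshold $|\Omega|<\sqrt{1-(2/(p\delta_p))^2}$; verifying that the augmented minimax level coincides with $\hat m_c$ and that the scaling is compatible with the constrained PS condition is routine but must be carried out carefully.
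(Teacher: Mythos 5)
Your proposal is correct and follows essentially the same route as the paper: the mountain-pass level separated from $m_c^r$ by the annulus/sphere barrier of Proposition \ref{prop2.3}, Jeanjean's augmented functional $\tilde I(u,\theta)=I(e^{N\theta/2}u(e^\theta\cdot))$ to produce a Palais--Smale sequence with the extra Pohozaev constraint $Q\to0$, boundedness from the quadratic-form positivity that pins down $|\Omega|<\sqrt{1-(2/(p\delta_p))^2}$ (your discriminant computation is equivalent to the paper's choice of $\varepsilon_1$ in \eqref{Gn1.2}), and strong convergence via the compact embedding and the equivalent norm \eqref{boud2.4}. The only cosmetic difference is that you re-derive the level gap on the sphere $\|u\|_{\dot\Sigma}^2=r$ directly instead of quoting Proposition \ref{prop2.3}, which already yields it for all $c<c_0$ without further shrinking $c_0$.
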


We give the outline of the proof for our main results. Theorem \ref{the1.5} is proved by searching minimizers of $I|_{S(c)\cap B(r)}$. Once $m_c^{r}>-\infty$ is proved, each minimizing sequence of $m_c^{r}$ is bounded in $\Sigma$. Observing that $\frac{1}{2}\|u\|_{\dot{\Sigma}}^2-\int_{\mathbb{R}^{N}}\bar{u}(\Omega \cdot L) u d x$ is an equivalent norm in $\Sigma$ provided $0\!<\!|\Omega|\!<\!1$, we deduce that $I$ is weakly lower semi-continuous (see \eqref{boud2.4}). Moreover, Lemma \ref{Compact} gives the compactness of the embedding $\Sigma\hookrightarrow L^q(\R^N,\mathbb{C})$ for $q\!\in\![2,2^*)$, then the existence of minimizer to $m_c^{r}$ follows. The rest is to show that the minimizer is not on the boundary of $S(c)\cap B(r)$, then it is indeed a critical point of $I|_{S(c)}$. To this end, we find a suitable constant $c_0\!=\!c_0(r,a,|\Omega|)$ such that for $c<c_0$, it holds that
\begin{align}\label{1.9+++}
\mathop {\inf }\limits_{u \in S(c)\cap B(\nu r)} I(u)<\mathop {\inf }\limits_{u \in S(c)\cap (B(r)\setminus B(\mu r))} I(u),
\end{align}
where $\nu\!=\!\frac{1-|\Omega|}{4}$ and $\mu\!=\!\frac{1+|\Omega|}{2}$.
This local minima structure \eqref{1.9+++} guarantees that all minimizing sequences of $m_c^{r}$ shrink and results in ${\mathcal{M}_c^r}\subset B(\nu r)$, which leads to the minimizer of \eqref{1.9} is bounded away from the boundary of $S(c)\cap B(r)$.

The proof of Theorem \ref{Asymtotic} mainly comes from \cite{BNlv}. The key point is to prove that
\[ v\in S(c)~~~~~~~~\mbox{such that}~~~~~~~~I^{'}|_{S(c)}(v)=0 \text{ and } I(v)<m_c^r \Longrightarrow v\in B(r)~~~~\text{as}~~~~c\to 0.\]
However, our case is different from the case $|\Omega|\!=\!0$ since the rotation term $\int_{\mathbb{R}^{N}} \bar{v}(\Omega \cdot L) v d x$ within $I(v)$ is sign indefinite. In fact, $I^{'}|_{S(c)}(v)=0$ gives the Pohozaev identity $$Q(v):=\frac{1}{2}  {{{\| {\nabla v} \|}_2^2}}  \!-\! \frac{1}{2} {{{\| xv \|}_2^{2}}}\!-\!a\delta_p { {{\| v \|}_p^p}}\!=\!0,$$
and hence $I(v)$ can be rewrite as $I(v)=\Big(\frac{1}{2}-\frac{1}{p\delta_p} \Big) {{{\| {\nabla v} \|}_2^2}}  \!+\! \Big(\frac{1}{2}+\frac{1}{p\delta_p} \Big) {{{\| xv \|}_2^{2}}}\!-\!\int_{{\R^N}} \bar{v}(\Omega \cdot L) v$, then the extra condition $0\!<\!|\Omega|\!<\!\sqrt{1\!-\!(\frac{2}{p\delta_p})^2}$ indicates that
\[
 C\|v\|_{\dot{\Sigma}}^2 \leq I(v) <m_c^r<\frac{Nc}{2}\to 0\ \text{as}\ c\to 0
\]
for some constant $C>0$. So $v\in B(r)$ as $c\to 0$ follows.

To prove Theorem \ref{stability}, we use the fact that any minimizing sequence of $m_c^r$ is precompact and ${\mathcal{M}_c^r}\neq\emptyset$ (see the proof of Theorem \ref{the1.5}). By a contradiction argument, we obtain the stability of ${\mathcal{M}_c^r}$.

Theorem \ref{the1.8} is proved by a variant of mountain pass theorem. Let $\gamma(c)$ be the mountain pass level, we will construct a special Palai-Smale sequence $\{v_n\}$ at energy level $\gamma(c)$ with $Q(v_n)\to0$.
When $|\Omega|\!=\!0$, the property $Q(v_n)\to0$ is sufficient to derive the boundedness of $\{v_n\}$ (see \cite{bj}). However, the term $\int_{\mathbb{R}^{N}} \bar{v}_n(\Omega \cdot L) v_n d x$ within $I(v_n)$ is sign indefinite if $0\!<\!|\Omega|\!<\!1$ and we can not proceed as in \cite{bj}. Under the stronger condition $0\!<\!|\Omega|\!<\!\sqrt{1\!-\!(\frac{2}{p\delta_p})^2}$, we can prove that
\[
 C\|v_n\|_{\dot{\Sigma}}^2+o_n(1)\leq I(v_n) \leq \gamma(c)+1
\]
for some constant $C>0$. Then $\{v_n\}$ is bounded in $\Sigma$. The rest is standard as in \cite{bj}.\\

\noindent \textbf{Remark 1.3}
The conditions $0\!<\!|\Omega|\!<\!1$ in Theorem \ref{the1.5} and  $0\!<\!|\Omega|\!<\!\sqrt{1\!-\!(\frac{2}{p\delta_p})^2}$ in Theorem \ref{the1.8} are necessary. In fact, the essence of the restrictions on $|\Omega|$ is that, 
$$ \|u\|^2_{\Omega_1} \approx \|u\|_{\dot{\Sigma}}^2 ~~~~~~~~\mbox{if}~~~~~~~~0\!<\!|\Omega|\!<\!1,~~~~~~~~ \|u\|^2_{\Omega_2} \approx \|u\|_{\dot{\Sigma}}^2~~~~~~~~\mbox{if}~~~~~~~~
0\!<\!|\Omega|\!<\!\sqrt{1\!-\!(\frac{2}{p\delta_p})^2},~~~~~~~~\forall u\in\Sigma,$$
where
\[\|u\|^2_{\Omega_1}:=\! \frac{1}{2}\|u\|_{\dot{\Sigma}}^2\!-\!\int_{\mathbb{R}^{N}}\bar{u}(\Omega \cdot L) u, \|u\|^2_{\Omega_2}:=\! \Big(\frac{1}{2}\!-\!\frac{1}{p\delta_p} \Big) {{{\| {\nabla u} \|}_2^2}}  \!+\! \Big(\frac{1}{2}\!+\!\frac{1}{p\delta_p} \Big) {{{\| x{u} \|}_2^{2}}}\!-\!\int_{{\R^N}} \bar{u}(\Omega \cdot L) u,\]
and $\|\cdot\|_{\mathcal{A}} \approx \|\cdot\|_{\mathcal{B}}$ means $\|\cdot\|_{\mathcal{A}}$ and $\|\cdot\|_{\mathcal{B}}$ are two equivalent norms. As pointed out above, $0\!<\!|\Omega|\!<\!1$ guarantees $ \|\cdot\|_{\Omega_1} \!\approx\! \|\cdot\|_{\dot{\Sigma}}$ and hence the weakly lower semi-continuity of $I$ in proving Theorem \ref{the1.5}, and $0\!<\!|\Omega|\!<\!\sqrt{1\!-\!(\frac{2}{p\delta_p})^2}$ guarantees $ \|\cdot\|_{\Omega_2} \!\approx\! \|\cdot\|_{\dot{\Sigma}}$ and hence the boundedness of the corresponding Palai-Smale sequence in proving Theorem \ref{the1.8}. Alternatively, we can obtain Theorem \ref{the1.5} by studying
\begin{align*}
m_c^r: = \mathop {\inf }\limits_{u \in S(c)\cap B(r)} I(u)~~~~~~~~\mbox{for}~~~~~~~~B(r)=\Big\{u\in \Sigma: \|u\|_{\Omega_1}^2\leq r\Big\}.
\end{align*}

\noindent \textbf{Remark 1.4}
Our main results in the present paper can be extended from $\dis V(x)\!=\!\frac{|x|^2}{2}$ to $\dis V(x)= \sum_{j=1}^{N}\frac{\gamma_{j}^{2} x_{j}^{2}}{2}$, with the rotation frequency satisfying $\dis 0\!<\!|\Omega|\!<\min_{1\leq j\leq N} \gamma_{j}$. Here $\gamma_j>0$ ($j=1,\cdots,N$) is the trapping frequencies in each spatial direction, see \cite{aMsC,JeSp,shYZ} for details.\\

The paper is organized as follows. In Section 2, we present some preliminary results. In Section 3, we prove Theorems \ref{the1.5}-\ref{stability}. In Section 4, we prove Theorem \ref{the1.8}. 

\vspace{.25cm}

\section{Preliminary Results}
In this section, we give some preliminary results. Firstly, we give the  Gagliardo-Nirenberg inequality (See \cite{Wein}).

\begin{lemma} \label{lem2.1}
Let $N\!\geq\!2$ and $p\!\in\!(2,2^{*})$. Then there exists a constant $\mathcal{C}_{N,p}\!>\!0$ such that
\begin{equation} \label{equ2.2}
 ||u||_{p} \leq \mathcal{C}_{N,p} \left\|\nabla u\right\|_{2}^{\delta_p} \left\|u\right\|_{2}^{(1-\delta_p)}, \qquad \forall u \in {H}^{1}(\mathbb{R}^{N},\R)
\end{equation}
where $\mathcal{C}_{N,p}\!=\!\Big( \frac{p}{2 ||W_p||^{p-2}_{2}} \Big)^{\frac{1}{p}}$, $W_p$ is the ground state solution of
$ -\Delta W\!+\!(\frac{1}{\delta_p}-\!1)W \!=\!\frac{2}{p\delta_p}|W|^{p-2}W$ and  $\delta_p\!=\!\frac{N(p-2)}{2p}$.
\end{lemma}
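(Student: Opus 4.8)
The statement to be proved is the Gagliardo--Nirenberg inequality of Lemma \ref{lem2.1}: for $N\geq 2$ and $p\in(2,2^*)$,
\[
 \|u\|_p \leq \mathcal{C}_{N,p}\,\|\nabla u\|_2^{\delta_p}\,\|u\|_2^{1-\delta_p},\qquad \forall u\in H^1(\mathbb{R}^N,\mathbb{R}),
\]
with the optimal constant given explicitly in terms of the ground state $W_p$ of a companion elliptic equation.

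The plan is to proceed by a scaling/optimization argument combined with the existence of an optimizer. First I would observe that the inequality is scale-invariant in the sense that if we replace $u(x)$ by $u_{\lambda,\mu}(x):=\mu\, u(\lambda x)$, then both sides transform homogeneously, so it suffices to establish the inequality, together with the value of the sharp constant, on a normalized slice. Concretely, define the Weinstein functional
\[
 J(u):=\frac{\|\nabla u\|_2^{p\delta_p}\,\|u\|_2^{p(1-\delta_p)}}{\|u\|_p^p},
\]
which is invariant under the two-parameter scaling above; the sharp constant is $\mathcal{C}_{N,p}=\big(\inf_{u\neq 0} J(u)\big)^{-1/p}$, so the content of the lemma is (i) that this infimum is positive and attained, and (ii) that its value equals $\frac{p}{2\|W_p\|_2^{p-2}}$.

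The key steps, in order, are: (1) Show $\inf J>0$ and that a minimizing sequence, after using the scaling freedom to normalize (say $\|\nabla u_n\|_2=\|u_n\|_2=1$) and passing to a Schwarz-symmetrized, hence radial decreasing, subsequence, converges weakly in $H^1$ to a nonzero limit; compactness of the embedding $H^1_{rad}\hookrightarrow L^p(\mathbb{R}^N)$ for $p\in(2,2^*)$ (Strauss) yields strong $L^p$ convergence, and weak lower semicontinuity of the $H^1$ norm forces the limit to be an optimizer. (2) Write the Euler--Lagrange equation for a minimizer $Q$: it satisfies $-a\Delta Q + bQ = d\,|Q|^{p-2}Q$ for positive Lagrange-type constants $a,b,d$ determined by the Lagrange multiplier rule applied to the constrained problem. (3) Use the remaining scaling freedom $Q(x)\mapsto \mu Q(\lambda x)$ to normalize the coefficients, turning the equation into exactly $-\Delta W +(\tfrac1{\delta_p}-1)W = \tfrac{2}{p\delta_p}|W|^{p-2}W$, whose positive radial ground state is $W_p$. (4) Finally, evaluate $J(W_p)$ explicitly: multiply the equation by $W_p$ and integrate to get one relation among $\|\nabla W_p\|_2^2$, $\|W_p\|_2^2$, $\|W_p\|_p^p$; use the Pohozaev identity for the equation to get a second relation; solve the linear system and substitute into $J$ to obtain $J(W_p)=\frac{2\|W_p\|_2^{p-2}}{p}$, hence $\mathcal{C}_{N,p}=\big(\frac{p}{2\|W_p\|_2^{p-2}}\big)^{1/p}$.

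I expect the main obstacle to be step (1), the compactness of the minimizing sequence: one must exploit the invariances carefully — translation invariance is killed by Schwarz symmetrization (which does not increase $\|\nabla u\|_2$ and preserves the other two norms), and the dilation/amplitude invariances are removed by the normalization — so that the limit cannot vanish or spread out, and the embedding $H^1_{rad}(\mathbb{R}^N)\hookrightarrow L^p$ is compact for the subcritical range $p\in(2,2^*)$. The algebraic steps (2)--(4) are routine once the optimizer is in hand; the only mild care needed there is keeping track of the exponents $\delta_p$ and $p\delta_p$ and checking that the Pohozaev identity is applied with the correct weights. Since this is a classical result, I would simply cite \cite{Wein} for the details and present the argument above as a sketch.
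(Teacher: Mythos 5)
The paper gives no proof of this lemma at all—it simply cites \cite{Wein}—and your sketch is precisely Weinstein's classical variational argument from that reference: minimize $J$, restore compactness via Schwarz symmetrization and the radial embedding, rescale the Euler--Lagrange equation to the normalized form, and evaluate the sharp constant via the two identities $\|\nabla W_p\|_2^2=\|W_p\|_2^2=\tfrac{2}{p}\|W_p\|_p^p$. The outline is correct, the exponent bookkeeping checks out, and the resulting constant $\mathcal{C}_{N,p}=\bigl(\tfrac{p}{2\|W_p\|_2^{p-2}}\bigr)^{1/p}$ matches the statement.
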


\begin{lemma} (\cite{Wein}) \label{Lem2.2}
Let $|x|u$ and $\nabla u$ belong to $L^2(\R^N,\R)$. Then, $u\in L^2(\R^N,\R)$ and
$$ \|u\|_{2}^{2} \leq \frac{2}{N}\|\nabla u\|_{2}\|x u\|_{2},$$
with equality holding for the functions $u(x)=e^{-\frac{1}{2}|x|^2}$.
\end{lemma}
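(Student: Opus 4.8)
The plan is to reduce the estimate to the classical Heisenberg uncertainty inequality via the divergence theorem and then recover the stated generality by a density argument; no earlier result from the paper is needed. Since the hypothesis $\nabla u\in L^2(\R^N,\R)$ is understood in the distributional sense, $u$ is a locally integrable function with $u\in H^1_{\mathrm{loc}}(\R^N)$. To see $u\in L^2(\R^N,\R)$ first, split $\int_{\R^N}|u|^2=\int_{|x|\le 1}|u|^2+\int_{|x|>1}|u|^2$: the first integral is finite because $u\in L^2_{\mathrm{loc}}$, and the second is bounded by $\int_{|x|>1}|x|^2|u|^2<+\infty$.

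First I would prove the inequality for $u\in C_c^\infty(\R^N,\R)$. Using $\operatorname{div}(x)=N$ and integrating by parts (the boundary term vanishes by compact support),
\[
N\|u\|_2^2=\int_{\R^N}\operatorname{div}(x)\,|u|^2\,dx=-\int_{\R^N}x\cdot\nabla(|u|^2)\,dx=-2\int_{\R^N}u\,(x\cdot\nabla u)\,dx.
\]
By the Cauchy--Schwarz inequality, $\bigl|2\int_{\R^N}u\,(x\cdot\nabla u)\,dx\bigr|\le 2\int_{\R^N}|xu|\,|\nabla u|\,dx\le 2\|xu\|_2\|\nabla u\|_2$, which gives $\|u\|_2^2\le\frac{2}{N}\|\nabla u\|_2\|xu\|_2$. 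Next I would remove the smoothness assumption: for general $u$ with $u,\nabla u,|x|u\in L^2(\R^N,\R)$, truncate with a cutoff $\chi(\cdot/R)$ and mollify to obtain $u_n\in C_c^\infty$ with $u_n\to u$, $\nabla u_n\to\nabla u$ and $|x|u_n\to|x|u$ in $L^2$. The only point requiring a word is $\|\nabla(\chi(\cdot/R)u)-\nabla u\|_2\to 0$: the term $(\chi(\cdot/R)-1)\nabla u$ tends to $0$ by dominated convergence, and the cross term is controlled by $\|u\,\nabla\chi(\cdot/R)\|_2^2\le \frac{C}{R^2}\int_{|x|\ge R}|u|^2\to 0$. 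Passing to the limit in the inequality for $u_n$ yields the claim for $u$.

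For the equality case I would trace back through the two inequalities used. Equality forces $|\nabla u|=\lambda|x|\,|u|$ a.e. (saturated Cauchy--Schwarz) together with the correct sign in $-2\int u(x\cdot\nabla u)\,dx=2\|xu\|_2\|\nabla u\|_2\ge 0$, hence $\nabla u=-\lambda\, x u$ a.e. for some constant $\lambda\ge 0$, i.e. $u(x)=c\,e^{-\lambda|x|^2/2}$. In particular $u(x)=e^{-\frac12|x|^2}$ satisfies $\nabla u=-xu$, so $\|\nabla u\|_2=\|xu\|_2$, and together with the Gaussian identity $\int_{\R^N}|x|^2e^{-|x|^2}\,dx=\frac N2\int_{\R^N}e^{-|x|^2}\,dx$ (i.e. $N\|u\|_2^2=2\|xu\|_2^2$) one gets $N\|u\|_2^2=2\|\nabla u\|_2\|xu\|_2$, so equality indeed holds for this function.

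The main obstacle is purely technical: the careful truncation-and-mollification step justifying the integration by parts under the weak hypotheses (equivalently, the density of $C_c^\infty(\R^N)$ in the weighted space $\{u:u,\nabla u,|x|u\in L^2\}$, which in the setting of this paper is exactly $\Sigma$). Everything else is an elementary computation.
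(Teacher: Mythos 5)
Your proof is correct. Note that the paper itself gives no proof of this lemma: it is quoted directly from Weinstein's paper \cite{Wein}, so there is no in-text argument to compare against. Your derivation is the standard one (integrate $\operatorname{div}(x)=N$ against $|u|^2$, integrate by parts, apply Cauchy--Schwarz twice, then remove the smoothness hypothesis by truncation and mollification), and your direct verification that $u(x)=e^{-\frac12|x|^2}$ saturates the inequality, via $\nabla u=-xu$ and $\|xu\|_2^2=\frac N2\|u\|_2^2$, is exactly what the statement requires; it also matches the explicit Gaussian computation the authors record later in Lemma 2.3. The only points worth tightening are (i) the passage from "$\nabla u\in L^2$ distributionally" to $u\in L^2_{\mathrm{loc}}$, which deserves one line (Poincar\'e--Sobolev on balls starting from $u\in L^1_{\mathrm{loc}}$), and (ii) the equality \emph{characterization} $\nabla u=-\lambda xu$, which is slightly informal at the a.e. level --- but that part is not actually claimed by the lemma, so it does not affect correctness.
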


\noindent \textbf{Remark 2.1} Lemma \ref{lem2.1} remains true for any $u \in {H}^{1}(\mathbb{R}^{N},\mathbb{C})$ and Lemma \ref{Lem2.2} remains true for any $u\in \Sigma$ since $\big|\nabla |u|\big|\leq |\nabla u|$, see Theorem 6. 17 in \cite{ELMA}.

\begin{lemma} \label{lem2.2+}
For any $r>0$, $S(c)\cap B(r)\neq \emptyset$ iff $c\leq\frac{r}{N}$.
\end{lemma}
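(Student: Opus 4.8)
The plan is to prove the equivalence by establishing the two implications separately, using Lemma \ref{Lem2.2} (in its $\Sigma$-version recorded in Remark 2.1) as the main tool. The inequality $\|u\|_2^2 \le \frac{2}{N}\|\nabla u\|_2 \|xu\|_2$ already provides the quantitative link between the mass and the $\dot{\Sigma}$-norm, and what remains is to combine it with the elementary bound $2\|\nabla u\|_2\|xu\|_2 \le \|\nabla u\|_2^2 + \|xu\|_2^2 = \|u\|_{\dot{\Sigma}}^2$ in one direction, and to exhibit an explicit scaled Gaussian saturating it in the other.

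For the ``only if'' direction, suppose $u \in S(c)\cap B(r)$, so $\|u\|_2^2 = c$ and $\|u\|_{\dot\Sigma}^2 \le r$. By Lemma \ref{Lem2.2} together with Young's inequality $ab \le \frac12(a^2+b^2)$,
\[
c = \|u\|_2^2 \le \frac{2}{N}\|\nabla u\|_2\|xu\|_2 \le \frac{1}{N}\bigl(\|\nabla u\|_2^2 + \|xu\|_2^2\bigr) = \frac{1}{N}\|u\|_{\dot\Sigma}^2 \le \frac{r}{N},
\]
which gives $c \le r/N$. For the ``if'' direction, assume $c \le r/N$ and construct an explicit element of $S(c)\cap B(r)$. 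Take the Gaussian profile $\phi(x) = e^{-|x|^2/2}$, which by Lemma \ref{Lem2.2} satisfies $\|\phi\|_2^2 = \frac{2}{N}\|\nabla\phi\|_2\|x\phi\|_2$ with equality, and moreover $\|\nabla\phi\|_2 = \|x\phi\|_2$ by direct computation (both equal $\frac{\sqrt N}{2}\|\phi\|_2$, consistent with the equality case). Now consider the two-parameter family $u_{\lambda,\tau}(x) = \lambda\,\phi(\tau x)$; then $\|u_{\lambda,\tau}\|_2^2 = \lambda^2\tau^{-N}\|\phi\|_2^2$, $\|\nabla u_{\lambda,\tau}\|_2^2 = \lambda^2\tau^{2-N}\|\nabla\phi\|_2^2$ and $\|x u_{\lambda,\tau}\|_2^2 = \lambda^2\tau^{-N-2}\|x\phi\|_2^2$. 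Choose $\tau = 1$ and then $\lambda>0$ so that $\lambda^2\|\phi\|_2^2 = c$; then $\|u_{\lambda,1}\|_{\dot\Sigma}^2 = \lambda^2(\|\nabla\phi\|_2^2+\|x\phi\|_2^2) = \lambda^2 \cdot N \|\phi\|_2^2 = Nc \le r$, so $u_{\lambda,1} \in S(c)\cap B(r)$ and the intersection is nonempty.

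I do not expect any serious obstacle here; the statement is essentially a repackaging of the uncertainty principle of Lemma \ref{Lem2.2}. The one point requiring a little care is to make sure the chosen test function actually realizes the threshold case $c = r/N$ (so that the bound $c \le r/N$ is sharp and not merely sufficient in a weaker form), which is why it is cleanest to use the Gaussian with $\|\nabla\phi\|_2 = \|x\phi\|_2$ rather than an arbitrary bump function: for the Gaussian all the inequalities above become equalities, so $\|u_{\lambda,1}\|_{\dot\Sigma}^2 = Nc$ exactly, and every value $c \le r/N$ is attained inside $S(c)\cap B(r)$.
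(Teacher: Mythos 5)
Your proof is correct and follows essentially the same route as the paper: the uncertainty principle of Lemma \ref{Lem2.2} (extended to $\Sigma$ via Remark 2.1) combined with Young's inequality for the ``only if'' direction, and the Gaussian $e^{-|x|^2/2}$ rescaled to mass $c$ for the ``if'' direction. One tiny slip: the parenthetical value $\frac{\sqrt N}{2}\|\phi\|_2$ should read $\sqrt{N/2}\,\|\phi\|_2$, but since your actual computation uses the correct squared identities $\|\nabla\phi\|_2^2=\|x\phi\|_2^2=\frac{N}{2}\|\phi\|_2^2$, the argument is unaffected.
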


\begin{proof}
Let $r>0$ be fixed. For any $u\in S(c)\cap B(r)\neq \emptyset$, Lemma \ref{Lem2.2} and Remark 2.1 indicate that
$$ c=\|u\|_{2}^{2} \leq \frac{2}{N}\|\nabla u\|_{2}\|x u\|_{2}
\leq \frac{2}{N}\Big(\frac{1}{2}\|\nabla u\|_{2}^2+\frac{1}{2}\|x u\|_{2}^2\Big)
 =\frac{1}{N} \|u\|_{\dot{\Sigma}}^2 \leq \frac{r}{N}.$$
On the other hand, let $\phi(x)=e^{-\frac{1}{2}|x|^2}$ and $\psi_0=\pi^{-\frac{N}{4}}e^{-\frac{1}{2}|x|^2}$, then we have
$$ \|\nabla \phi\|^{2}_{2}=\|x \phi\|^{2}_{2}=\frac{N}{2}\|\phi\|_{2}^{2}=\frac{N}{2}\pi^{\frac{N}{2}},~~~~~~~~
\|\nabla \psi_0\|^{2}_{2}=\|x \psi_0\|^{2}_{2}=\frac{N}{2},~~~~~~~~
\|\psi_0\|_{2}^{2}=1.$$
For any $c\leq\frac{r}{N}$, we have $\sqrt{c}\psi_0\in S(c)\cap B(r)$.
\end{proof}

By Young's inequality and the fact that $\Omega \cdot L:=-i \Omega \cdot (x \wedge \nabla)=-i (\Omega \wedge x) \cdot \nabla$, we obtain the following interpolation inequality.
\begin{lemma}(\cite{JeSp}, Inequality (2.3)) \label{LTa2.2}
Let $\Omega \cdot L:=-i \Omega \cdot (x \wedge \nabla)$. For any $\varepsilon>0$, it holds that
\begin{align}\label{Gn1.2}
| \langle u,(\Omega \cdot L) u \rangle |
\leq \|(\Omega \wedge x) u \|_{2} \|\nabla u \|_{2}
\leq \frac{|\Omega|^{2}}{2 \varepsilon} \|x u \|_{2}^{2}+\frac{\varepsilon}{2}\|\nabla u \|_{2}^{2},~~~~\forall u\in \Sigma.
\end{align}
\end{lemma}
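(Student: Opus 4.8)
The plan is to use the pointwise identity $(\Omega\cdot L)u=-i(\Omega\wedge x)\cdot\nabla u$ together with two applications of the Cauchy--Schwarz inequality and a final use of Young's inequality, following \cite{JeSp}. Since $\Omega\cdot L$ is symmetric on $\Sigma$, the quantity $\langle u,(\Omega\cdot L)u\rangle$ is real (equivalently, $\int_{\R^N}(\Omega\wedge x)\cdot\nabla(|u|^2)=-\int_{\R^N}(\nabla\cdot(\Omega\wedge x))|u|^2=0$ because $\nabla\cdot(\Omega\wedge x)=\partial_{x_1}(-|\Omega|x_2)+\partial_{x_2}(|\Omega|x_1)=0$, so $\int_{\R^N}\bar u\,(\Omega\wedge x)\cdot\nabla u$ is purely imaginary), hence $|\langle u,(\Omega\cdot L)u\rangle|$ makes sense as written.

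For the first inequality, write
\[
\langle u,(\Omega\cdot L)u\rangle=\int_{\R^N}\bar u\,\big(-i(\Omega\wedge x)\cdot\nabla u\big)\,dx,
\]
apply the Cauchy--Schwarz inequality to the dot product $(\Omega\wedge x)\cdot\nabla u$ pointwise (note $\Omega\wedge x$ is real-valued) to get $|\bar u\,(\Omega\wedge x)\cdot\nabla u|\le|(\Omega\wedge x)u|\,|\nabla u|$, and then apply the Cauchy--Schwarz inequality in $L^2(\R^N)$ to the two factors; this yields
\[
|\langle u,(\Omega\cdot L)u\rangle|\le\|(\Omega\wedge x)u\|_2\,\|\nabla u\|_2.
\]
All integrals are finite for $u\in\Sigma$ since $|x|u,\nabla u\in L^2(\R^N)$.

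For the second inequality, use that $\Omega=(0,0,|\Omega|)$ yields $\Omega\wedge x=|\Omega|(-x_2,x_1,0)$, hence the pointwise bound $|\Omega\wedge x|=|\Omega|\sqrt{x_1^2+x_2^2}\le|\Omega|\,|x|$ and therefore $\|(\Omega\wedge x)u\|_2\le|\Omega|\,\|xu\|_2$. Finally, Young's inequality $\alpha\beta\le\frac{\alpha^2}{2\varepsilon}+\frac{\varepsilon\beta^2}{2}$ with $\alpha=|\Omega|\,\|xu\|_2$ and $\beta=\|\nabla u\|_2$ gives
\[
\|(\Omega\wedge x)u\|_2\,\|\nabla u\|_2\le|\Omega|\,\|xu\|_2\,\|\nabla u\|_2\le\frac{|\Omega|^2}{2\varepsilon}\|xu\|_2^2+\frac{\varepsilon}{2}\|\nabla u\|_2^2.
\]
There is no genuine obstacle in this argument; the only point deserving attention is the reality of $\langle u,(\Omega\cdot L)u\rangle$, which is addressed above, and the harmless verification that $|\Omega\wedge x|\le|\Omega|\,|x|$.
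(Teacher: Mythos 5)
Your proof is correct and follows exactly the route the paper indicates (it cites the inequality from \cite{JeSp} and remarks only that it follows from the identity $\Omega\cdot L=-i(\Omega\wedge x)\cdot\nabla$ together with Cauchy--Schwarz and Young's inequality). Your additional verification that $\langle u,(\Omega\cdot L)u\rangle$ is real, via $\nabla\cdot(\Omega\wedge x)=0$, is a harmless and correct supplement to what the paper leaves implicit.
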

We recall the following compactness result:
\begin{lemma} (\cite{Mark,ZAnG}) \label{Compact}
For $N\!\geq\!2$ and $q\!\in\![2,2^*)$, the embedding $\Sigma\hookrightarrow L^q(\R^N,\mathbb{C})$ is compact.\\
\end{lemma}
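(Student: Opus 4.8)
The plan is the classical ``local compactness plus uniform decay'' argument, in which the confining weight $|x|^2$ supplies the decay at infinity. Let $(u_n)\subset\Sigma$ be bounded, say $\|u_n\|_\Sigma\le M$. Since $\Sigma$ is a Hilbert space continuously embedded in $H^1(\R^N,\mathbb{C})$, after passing to a subsequence we may assume $u_n\rightharpoonup u$ in $\Sigma$; then $u_n\rightharpoonup u$ in $H^1(\R^N,\mathbb{C})$ as well, and testing $xu_n\rightharpoonup$ (its weak $L^2$-limit) against $C_c^\infty$ functions identifies the limit as $xu$, so $u\in\Sigma$. By the Rellich--Kondrachov theorem (applied to the real and imaginary parts), $u_n\to u$ strongly in $L^q_{\mathrm{loc}}(\R^N)$ for every $q\in[2,2^*)$.

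The only thing to add is uniform control of the mass near infinity. For $q=2$ this is immediate from the weight: for every $R>0$,
\[
\int_{|x|>R}|u_n|^2\,dx\le \frac{1}{R^2}\int_{|x|>R}|x|^2|u_n|^2\,dx\le\frac{M^2}{R^2}.
\]
For $q\in(2,2^*)$ I would interpolate: fix $r$ with $q<r\le 2^*$ if $N=3$ and $q<r<\infty$ if $N=2$, and write $\frac1q=\frac{1-\la}{2}+\frac{\la}{r}$ with $\la\in(0,1)$; then, using the Sobolev embedding $H^1(\R^N,\mathbb{C})\hookrightarrow L^r(\R^N,\mathbb{C})$,
\[
\Big(\int_{|x|>R}|u_n|^q\Big)^{1/q}\le\Big(\int_{|x|>R}|u_n|^2\Big)^{\frac{1-\la}{2}}\|u_n\|_r^{\la}\le\Big(\frac{M^2}{R^2}\Big)^{\frac{1-\la}{2}}(C M)^{\la},
\]
with $C$ the Sobolev constant, so the left-hand side tends to $0$ as $R\to\infty$ uniformly in $n$. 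The same bounds hold with $u_n$ replaced by $u$ (by Fatou's lemma, or simply since $u\in\Sigma$ and the above applies to $u$ itself).

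Finally I would assemble the pieces. Given $\ep>0$, choose $R$ so large that $\sup_n\int_{|x|>R}|u_n|^q+\int_{|x|>R}|u|^q<\ep$. Splitting $\R^N=B_R\cup(\R^N\setminus B_R)$ and using $|u_n-u|^q\le 2^{q-1}(|u_n|^q+|u|^q)$,
\[
\int_{\R^N}|u_n-u|^q\,dx\le\int_{B_R}|u_n-u|^q\,dx+2^{q-1}\ep ,
\]
and the first term on the right tends to $0$ by the local strong convergence from Rellich--Kondrachov. Hence $\limsup_n\|u_n-u\|_q^q\le 2^{q-1}\ep$, and since $\ep>0$ is arbitrary, $u_n\to u$ in $L^q(\R^N,\mathbb{C})$. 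As the bounded sequence was arbitrary, the embedding $\Sigma\hookrightarrow L^q(\R^N,\mathbb{C})$ is compact.

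There is no serious obstacle here: the argument is routine. The single point that must be exploited — and which fails for the plain $H^1(\R^N)$ embedding — is that the term $\|xu\|_2^2$ in the $\Sigma$-norm forces the $L^2$-mass of a bounded family to be uniformly small outside large balls; everything else is Hölder interpolation, the Sobolev embedding, and Rellich--Kondrachov on balls.
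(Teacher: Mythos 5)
Your proof is correct, and it is the standard argument behind this lemma: the paper itself gives no proof, merely citing \cite{Mark,ZAnG}, where exactly this combination of the weighted tail bound $\int_{|x|>R}|u_n|^2\le R^{-2}\|xu_n\|_2^2$, H\"older interpolation with the Sobolev embedding, and Rellich--Kondrachov on balls is used. Nothing is missing.
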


\section{Proof of Theorems \ref{the1.5}-\ref{stability}}

In this section, we prove Theorems \ref{the1.5}-\ref{stability}. 
To begin with we show that $I|_{S(c)}$ presents a local minima structure by the previous lemmas. This fact guarantees that the minimizer of $m_c^r$ is indeed a critical point of $I|_{S(c)}$.

\begin{proposition} \label{prop2.3}
Let $a \!>\!0$, $N\!=\!2,3$, $2\!+\!\frac{4}{N}\!\leq\!p\!<\!2^*$ and $0<|\Omega|<1$. For any $r>0$, if $S(c)\cap (B(\mu r)\setminus B(\nu r))\neq\emptyset$, then there exists $c_0:=c_0(r,a,\Omega)>0$ such that for any $c<c_0$,
\begin{align}\label{2.3}
\mathop {\inf }\limits_{u \in S(c)\cap B(\nu r)} I(u)<\mathop {\inf }\limits_{u \in S(c)\cap (B(r)\setminus B(\mu r))} I(u),
\end{align}
where $\nu=\frac{1-|\Omega|}{4}$ and $\mu=\frac{1+|\Omega|}{2}$.
\end{proposition}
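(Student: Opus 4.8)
The plan is to estimate $I(u)$ from below and above on the relevant annular regions of $S(c)$, exploiting that, for $0<|\Omega|<1$, the quadratic form $\|u\|_{\Omega_1}^2 := \tfrac12\|u\|_{\dot\Sigma}^2 - \int_{\R^N}\bar u(\Omega\cdot L)u$ is equivalent to $\tfrac12\|u\|_{\dot\Sigma}^2$. Concretely, Lemma \ref{LTa2.2} with a fixed choice of $\varepsilon$ (say $\varepsilon=|\Omega|$) yields
$$\Big|\int_{\R^N}\bar u(\Omega\cdot L)u\Big|\le \frac{|\Omega|}{2}\big(\|\nabla u\|_2^2+\|xu\|_2^2\big)=\frac{|\Omega|}{2}\|u\|_{\dot\Sigma}^2,$$
so that
$$\frac{1-|\Omega|}{2}\|u\|_{\dot\Sigma}^2\ \le\ \|u\|_{\Omega_1}^2\ \le\ \frac{1+|\Omega|}{2}\|u\|_{\dot\Sigma}^2 .$$
Therefore $I(u)=\|u\|_{\Omega_1}^2-\frac{2a}{p}\|u\|_p^p$, and the Gagliardo--Nirenberg inequality (Lemma \ref{lem2.1}, valid on $\Sigma$ by Remark 2.1) together with $\|\nabla u\|_2^2\le\|u\|_{\dot\Sigma}^2\le r$ gives
$$\frac{2a}{p}\|u\|_p^p\le \frac{2a}{p}\mathcal{C}_{N,p}^p\,\|\nabla u\|_2^{p\delta_p}c^{\frac{p(1-\delta_p)}{2}}\le \frac{2a}{p}\mathcal{C}_{N,p}^p\,r^{\frac{p\delta_p}{2}}c^{\frac{p(1-\delta_p)}{2}}=:K(r,c),$$
a quantity that is uniform on $S(c)\cap B(r)$ and tends to $0$ as $c\to 0^+$ for fixed $r$ (since $p<2^*$ forces $1-\delta_p>0$).

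Next I would bound each side of \eqref{2.3}. For the left side, fix a convenient test function: by Lemma \ref{lem2.2+} (and its proof) $\sqrt{c}\,\psi_0\in S(c)\cap B(Nc)$ with $\|\sqrt{c}\,\psi_0\|_{\dot\Sigma}^2=Nc$, and for $c<c_0$ small enough that $Nc\le \nu r$ this function lies in $S(c)\cap B(\nu r)$; hence
$$\mathop{\inf}\limits_{u\in S(c)\cap B(\nu r)}I(u)\le I(\sqrt c\,\psi_0)\le \|\sqrt c\,\psi_0\|_{\Omega_1}^2\le \frac{1+|\Omega|}{2}\,Nc .$$
(One may instead just use $I(\sqrt c\,\psi_0)\le \frac{(1+|\Omega|)N}{2}c$, which is $O(c)$.) For the right side, any $u\in S(c)\cap(B(r)\setminus B(\mu r))$ has $\|u\|_{\dot\Sigma}^2>\mu r$, so
$$I(u)\ \ge\ \frac{1-|\Omega|}{2}\|u\|_{\dot\Sigma}^2-K(r,c)\ >\ \frac{1-|\Omega|}{2}\,\mu r-K(r,c)=\frac{(1-|\Omega|)(1+|\Omega|)}{4}r-K(r,c)=\frac{(1-|\Omega|^2)}{4}r-K(r,c).$$
Comparing the two estimates, \eqref{2.3} holds as soon as
$$\frac{(1+|\Omega|)N}{2}\,c\ +\ \frac{2a}{p}\mathcal{C}_{N,p}^p\,r^{\frac{p\delta_p}{2}}c^{\frac{p(1-\delta_p)}{2}}\ <\ \frac{1-|\Omega|^2}{4}\,r .$$
Since the left-hand side is a continuous, increasing function of $c$ vanishing at $c=0$ while the right-hand side is a fixed positive number (here $0<|\Omega|<1$ is used), there is $c_0=c_0(r,a,|\Omega|)>0$ such that the inequality holds for all $c<c_0$; shrinking $c_0$ further if necessary to also ensure $Nc_0\le\nu r$ (so that the test function argument and the nonemptiness from Lemma \ref{lem2.2+} are legitimate), the proof is complete.

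The main point to handle carefully is the interplay of the two scales $c$ and $r$: one must be sure that the ``good'' exponent $\frac{p(1-\delta_p)}{2}$ on $c$ is positive (guaranteed by $p<2^*$), that all constants depend only on $r,a,|\Omega|$ and not on $u$, and that the final $c_0$ simultaneously satisfies the threshold from the comparison inequality above and the constraint $Nc_0\le\nu r$ from Lemma \ref{lem2.2+}. No genuine obstacle arises beyond bookkeeping: the equivalence of $\|\cdot\|_{\Omega_1}$ with $\|\cdot\|_{\dot\Sigma}$ is exactly where $0<|\Omega|<1$ enters, and the mass-supercritical (or critical) range $p\ge 2+\tfrac4N$ is not actually needed for this particular proposition — only $p<2^*$ is used, through $1-\delta_p>0$.
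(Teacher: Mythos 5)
Your proof is correct, and it differs from the paper's in one substantive step: how the left-hand infimum in \eqref{2.3} is bounded from above. The paper bounds $I(u)\le C^{*}(\Omega)\|u\|_{\dot{\Sigma}}^2\le \nu r\,C^{*}(\Omega)$ \emph{uniformly over all of} $S(c)\cap B(\nu r)$, a quantity that does not shrink as $c\to 0$; the inequality \eqref{2.3} then hinges on the delicate constant relation $C^{*}(\Omega)<\frac{\mu}{\nu}C_{*}(\Omega)$, which forces a specific choice of $\varepsilon_0=\frac{\mu+\nu}{\mu-\nu}|\Omega|$ in the interpolation inequality \eqref{Gn1.2}. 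You instead evaluate $I$ at the single test function $\sqrt{c}\,\psi_0$ (legitimate once $Nc\le\nu r$, by Lemma \ref{lem2.2+}), obtaining an $O(c)$ upper bound that trivially beats the fixed positive lower bound $\frac{1-|\Omega|^2}{4}r-K(r,c)$ on the outer annulus for small $c$; consequently any $\varepsilon$ making $\|\cdot\|_{\Omega_1}$ equivalent to $\|\cdot\|_{\dot{\Sigma}}$ (your $\varepsilon=|\Omega|$) suffices, and no tuning of the equivalence constants is needed. Your route is more elementary and, as you correctly note, works for all $2<p<2^*$ since you bound $\|\nabla u\|_2^{p\delta_p}\le r^{p\delta_p/2}$ directly rather than factoring out $\|u\|_{\dot{\Sigma}}^2$ as in \eqref{2.4} (which implicitly uses $p\delta_p\ge 2$). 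What the paper's version buys in exchange is a uniform energy bound on the whole inner ball and the explicit constants $C_{*}(\Omega)$, $C^{*}(\Omega)$ that are reused later (in the Lagrange-multiplier estimates of Theorem \ref{the1.5} and in the formula \eqref{cbound2.5} for $c_0$); for the inf--inf comparison asserted by the proposition itself, your argument is complete.
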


\begin{proof}
Let $\nu\!=\!\frac{1-|\Omega|}{4}$ and $\mu\!=\!\frac{1+|\Omega|}{2}$ be fixed, then $0\!<\!\nu\!<\!\mu\!<\!1$ and $\frac{\nu}{\mu}\!<\!\frac{1-\sqrt{|\Omega|}}{1+\sqrt{|\Omega|}}
<\frac{1-|\Omega|}{1+|\Omega|}$ as $0\!<\!|\Omega|\!<\!1$. By Lemma \ref{lem2.2+}, $S(c)\cap B(\nu r)\neq \emptyset$ iff $c\leq\frac{\nu r}{N}$.
If $S(c)\cap B(\nu r)\neq \emptyset$ and $S(c)\cap (B(r)\setminus B(\mu r))\neq\emptyset$, we will prove \eqref{2.3}. Since $\frac{\nu}{\mu}<\frac{1-|\Omega|}{1+|\Omega|}$, we can choose $\varepsilon_0\in (\frac{\mu+\nu}{\mu-\nu}|\Omega|^{2},\frac{\mu-\nu}{\mu+\nu})$ and denote
\begin{align*}
C_{*}(\Omega):=\min \Big \{ \frac{(1-\varepsilon_0)}{2}, \Big(\frac{1}{2}-\frac{|\Omega|^{2}}{2 \varepsilon_0}\Big) \Big\},~~~~ C^{*}(\Omega):=\max \Big \{ \frac{(1+\varepsilon_0)}{2}, \Big(\frac{1}{2}+\frac{|\Omega|^{2}}{2 \varepsilon_0}\Big) \Big\}.
\end{align*}
We deduce from $\frac{\nu}{\mu}\!<\!\frac{1-\sqrt{|\Omega|}}{1+\sqrt{|\Omega|}}$ that $\frac{\mu+\nu}{\mu-\nu}|\Omega|^{2} \!<\! \frac{\mu+\nu}{\mu-\nu}|\Omega| \!<\! \frac{\mu-\nu}{\mu+\nu}$. From now on, let $$\varepsilon_0=\frac{\mu+\nu}{\mu-\nu}|\Omega|
=\frac{3+|\Omega|}{1+3|\Omega|}|\Omega|$$
be fixed. Direct calculations imply that
\begin{align} \label{consbound}
 \frac{1-{|\Omega|}^2}{2(1+3|\Omega|)}=C_{*}(\Omega)\!<\!C^{*}(\Omega)
 =\frac{1+6|\Omega|+{|\Omega|}^2}{2(1+3|\Omega|)}
~~~~\mbox{and}~~~~C^{*}(\Omega)
\!<\!\frac{\mu}{\nu}C_{*}(\Omega).
\end{align}
Applying inequality \eqref{Gn1.2} in Lemma \ref{LTa2.2} with $\varepsilon\!=\!\varepsilon_0$, we have
\begin{align} \label{boud2.4}
C_{*}(\Omega) \|u\|_{\dot{\Sigma}}^2 \leq  \frac{1}{2}\|u\|_{\dot{\Sigma}}^2-\int_{\mathbb{R}^{N}}\bar{u}(\Omega \cdot L) u d x \leq C^{*}(\Omega) \|u\|_{\dot{\Sigma}}^2.
\end{align}
We see that
$ \frac{1}{2}\|u\|_{\dot{\Sigma}}^2-\int_{\mathbb{R}^{N}}\bar{u}(\Omega \cdot L) u d x$ is a new norm which is equivalent to $\|u\|_{\dot{\Sigma}}^2$. This fact is also observed by N. Basharat et al. in \cite{shYZ}.  

For any $u \in S(c)\cap (B(r)\setminus B(\mu r))$, by \eqref{equ2.2} in Lemma \ref{lem2.1} and \eqref{boud2.4}, we have
\begin{align}\label{2.4}
I(u)&= \displaystyle\frac{1}{2} [\left\| {\nabla u} \right\|_2^2 +\left\| xu \right\|_2^2]- \frac{{2a}}{p}\left\| u \right\|_p^p- \int_{\mathbb{R}^{N}}\bar{u}(\Omega \cdot L) u d x \nonumber\\
&\geq \displaystyle C_{*}(\Omega) \|u\|_{\dot{\Sigma}}^2-\frac{{2a}}{p} \mathcal{C}^p_{N,p} \left\|\nabla u\right\|_{2}^{p\delta_p} c^{\frac{p(1-\delta_p)}{2}}\geq \displaystyle C_{*}(\Omega) \|u\|_{\dot{\Sigma}}^2 -\frac{{2a}}{p} \mathcal{C}^p_{N,p} \|u\|_{\dot{\Sigma}}^{p\delta_p} c^{\frac{p(1-\delta_p)}{2}} \nonumber\\
&=\displaystyle\|u\|_{\dot{\Sigma}}^2 \Big(C_{*}(\Omega)-\frac{{2a}}{p} \mathcal{C}^p_{N,p} \|u\|_{\dot{\Sigma}}^{p\delta_p-2} c^{\frac{p(1-\delta_p)}{2}}\Big)\geq \displaystyle \mu r \Big(C_{*}(\Omega)-\frac{{2a}}{p} \mathcal{C}^p_{N,p} r^{\frac{p\delta_p-2}{2}} c^{\frac{p(1-\delta_p)}{2}}\Big) 
\end{align}
where we restrict $c<\Big[\frac{pC_{*}(\Omega)}{{2a\mathcal{C}^p_{N,p}  r^{\frac{p\delta_p-2}{2}}}} \Big]^{\frac{2}{p(1-\delta_p)}}$ in the last inequality.

On the other hand, for any $u\in { S(c)\cap B(\nu r)}$, we deduce from \eqref{boud2.4} that
\begin{align}\label{2.5}
I(u)\!=\! \displaystyle\frac{1}{2}\|u\|_{\dot{\Sigma}}^2 \displaystyle\!-\! \frac{{2a}}{p}\left\| u \right\|_p^p\!-\! \int_{\mathbb{R}^{N}}\bar{u}(\Omega \cdot L) u d x
\!\leq\! \displaystyle C^{*}(\Omega) \|u\|_{\dot{\Sigma}}^2 \!\leq\! \nu r C^{*}(\Omega).  
\end{align}
Hence by \eqref{2.4} and \eqref{2.5}, we have \eqref{2.3} holds provided $c<c_0=c_0(r,a,\Omega)$, where
\begin{align}\label{cbound2.5}
c_0:=\!\min\Big\{\frac{(1\!-\!|\Omega|) r}{4N},\Big[\frac{p(1\!-\!|\Omega|)^3}{{16(1\!+\!3|\Omega|) a\mathcal{C}^p_{N,p}  r^{\frac{p\delta_p-2}{2}}}} \Big]^{\frac{2}{p(1-\delta_p)}}, \Big[\frac{1\!-\!{|\Omega|}^2}{{2(1\!+\!3|\Omega|)a\mathcal{C}^p_{N,p}  r^{\frac{p\delta_p-2}{2}}}} \Big]^{\frac{2}{p(1-\delta_p)}}\Big\}.
\end{align}
\end{proof}

We also need the following Pohozaev identity.

\begin{proposition}  \label{pohozave}
Let $a,\lambda\!\in \R$, $N\!=\!2,3$, $2\!<\!p\!\leq\!2^*$ and $0\!<\!|\Omega|\!<\!1$. If $v\in \Sigma$ weakly solves
\begin{align}\label{Poeq3.6}
-\frac{1}{2} \Delta v+\frac{1}{2}|x|^2 v-(\Omega \cdot L)v -a|v|^{p-2} v=\lambda v,
\end{align}
then the Pohozaev identity
\begin{align}\label{Poho3.6}
Q(v):=\frac{1}{2}  {{{\| {\nabla v} \|}_2^2}}  \!-\! \frac{1}{2} {{{\| xv \|}_2^{2}}}\!-\!a\delta_p { {{\| v \|}_p^p}}\!=\!0
\end{align}
holds, where $\delta_p=\frac{N(p-2)}{2p}$.
\end{proposition}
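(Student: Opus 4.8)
The plan is to derive the Pohozaev identity \eqref{Poho3.6} by testing the weak formulation of \eqref{Poeq3.6} against the generator of dilations applied to $v$, namely $\varphi = x\cdot\nabla v$ (more precisely, a regularized/truncated version), and then carefully tracking how each term in \eqref{Poeq3.6} scales. Equivalently, and perhaps more cleanly, I would introduce the scaling family $v_\tau(x):=\tau^{N/2}v(\tau x)$, which preserves the $L^2$-norm, and consider the function
\[
g(\tau):=I(v_\tau)-\lambda\Big(\tfrac{\|v_\tau\|_2^2-c}{2}\Big),
\]
or rather directly compute $\frac{d}{d\tau}\big|_{\tau=1}$ of the energy-type functional associated with \eqref{Poeq3.6}; since $v$ is a critical point of $I$ restricted to $S(c)$ with multiplier $\lambda$, this derivative must vanish. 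This produces the identity once we know the precise homogeneity of each piece under $\tau$.

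The key computation is the scaling behavior of the four terms. First, $\|\nabla v_\tau\|_2^2=\tau^2\|\nabla v\|_2^2$ and $\|xv_\tau\|_2^2=\tau^{-2}\|xv\|_2^2$; second, $\|v_\tau\|_p^p=\tau^{p\delta_p}\|v\|_p^p$ with $\delta_p=\frac{N(p-2)}{2p}$ exactly the exponent appearing in \eqref{Poho3.6} (indeed $p\delta_p=\frac{N(p-2)}{2}$, matching the scaling $\int|v_\tau|^p=\tau^{Np/2-N}\int|v|^p$); and third — this is the point requiring care — the rotation term is invariant: $\int_{\R^N}\bar v_\tau(\Omega\cdot L)v_\tau\,dx=\int_{\R^N}\bar v(\Omega\cdot L)v\,dx$, because $\Omega\cdot L=-i|\Omega|(x_1\partial_{x_2}-x_2\partial_{x_1})$ commutes with isotropic dilations (the dilation $x\mapsto\tau x$ sends $x_j\partial_{x_k}$ to itself). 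Likewise $\|v_\tau\|_2^2=\|v\|_2^2$ is $\tau$-independent, so the $\lambda$-term contributes nothing to the derivative. Differentiating
\[
\tau\mapsto \tfrac12\tau^2\|\nabla v\|_2^2+\tfrac12\tau^{-2}\|xv\|_2^2-\tfrac{2a}{p}\tau^{p\delta_p}\|v\|_p^p-\int_{\R^N}\bar v(\Omega\cdot L)v\,dx
\]
at $\tau=1$ and setting the result to zero yields $\|\nabla v\|_2^2-\|xv\|_2^2-2a\delta_p\|v\|_p^p=0$, which is $2Q(v)=0$.

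The main obstacle is the rigorous justification that one may legitimately differentiate in $\tau$ and test against $x\cdot\nabla v$: a priori $v\in\Sigma$ is only an $H^1$ weak solution, so $x\cdot\nabla v$ need not lie in $\Sigma$, and $v_\tau$ may only be in $\Sigma$ with $\tau\mapsto I(v_\tau)$ differentiable if some elliptic regularity is invoked. The standard remedy, which I would follow, is to first upgrade regularity: by standard elliptic bootstrap applied to \eqref{Poeq3.6} (treating $\frac12|x|^2 v$, $(\Omega\cdot L)v$ and $a|v|^{p-2}v$ as data, using $p<2^*$ and $N=2,3$), one gets $v\in H^2_{loc}$ with enough decay (from the harmonic-oscillator structure, $v\in\Sigma$ already forces $L^2$-decay of $v$ and $\nabla v$ weighted by $|x|$), so that $x\cdot\nabla v$ is an admissible test function after a cutoff $\chi_R(x)=\chi(x/R)$ and a limit $R\to\infty$; the boundary terms vanish because of the weighted integrability built into $\Sigma$. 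Alternatively one can mollify. Once the testing is justified, integration by parts on each term reproduces exactly the scaling derivatives above — in particular $\mathrm{Re}\int\overline{(x\cdot\nabla v)}(\Omega\cdot L)v\,dx=0$ by the dilation-invariance/antisymmetry of $\Omega\cdot L$, which is the analogue of the $\tau$-independence noted before. I would present the scaling argument as the clean narrative and relegate the regularity-plus-cutoff justification to a short paragraph, citing \cite{aMsC} or standard references for the local well-posedness/regularity framework in $\Sigma$.
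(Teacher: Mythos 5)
Your main route---differentiating $\tau\mapsto I(v_\tau)$ at $\tau=1$ along the $L^2$-preserving dilation $v_\tau=\tau^{N/2}v(\tau\cdot)$ and using that $v$ is a constrained critical point---is sound, and it is the infinitesimal form of what the paper actually does: the paper tests \eqref{Poeq3.6} against $x\cdot\nabla\bar v$ and against $\bar v$ (to eliminate $\lambda$; this is why the $\lambda$-term also disappears in your version, since $\|v_\tau\|_2$ is $\tau$-independent) and combines the two in \eqref{p3.4}--\eqref{p3.6}. Your scaling exponents are all correct, and the observation that $\int\bar v_\tau(\Omega\cdot L)v_\tau$ is $\tau$-independent is true and is arguably the cleanest way to see why the rotation term drops out of the final identity; the paper instead proves the equivalent integration-by-parts fact directly. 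The regularity/cutoff caveat you raise is genuine (the paper passes over it silently), and your proposed remedy is the standard one.

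There is, however, one concretely false identity in your ``rigorous testing'' paragraph: $\mathrm{Re}\int_{\R^N}\overline{(x\cdot\nabla v)}\,(\Omega\cdot L)v\,dx$ is \emph{not} zero in general. The paper computes, using the antisymmetry of $x_1\partial_{x_2}-x_2\partial_{x_1}$ and its commutation with $x\cdot\nabla$, that
\begin{align*}
\mathrm{Re}\int_{\R^N}[(\Omega\cdot L)v](x\cdot\nabla\bar v)\,dx=-\frac{N}{2}\int_{\R^N}\bar v(\Omega\cdot L)v\,dx,
\end{align*}
which is nonzero whenever $v$ carries angular momentum (only for real-valued $v$ does it vanish). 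What \emph{is} zero is the pairing of $(\Omega\cdot L)v$ against the full generator of the unitary dilation, $\tfrac{N}{2}v+x\cdot\nabla v$: the $\tfrac{N}{2}v$ piece contributes $+\tfrac{N}{2}\int\bar v(\Omega\cdot L)v$ and cancels the above. This is exactly the content of your $\tau$-invariance, but you misattributed it to the $x\cdot\nabla v$ piece alone. The error is not harmless: if you run the multiplier version with your identity as written, the $\bar v$-tested equation \eqref{p3.5} (needed to remove $\lambda$) reintroduces $-\tfrac{N}{2}\int\bar v(\Omega\cdot L)v$ with nothing to cancel it, and you would land on $Q(v)=\tfrac{N}{2}\int\bar v(\Omega\cdot L)v$ instead of \eqref{Poho3.6}. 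So in the final write-up either keep $\tfrac{N}{2}v+x\cdot\nabla v$ (suitably truncated) as the test function throughout, or replace your claimed identity by the correct one above.
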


\begin{proof}
Multiply \eqref{Poeq3.6} by $x\cdot \nabla \bar{v}$, integrate by parts and take real parts, we obtain
\begin{align}\label{p3.4}
\frac{2\!-\!N}{4}  {{{\| {\nabla v} \|}_2^2}}  \!-\! \frac{N\!+\!2}{4}{{{\| xv \|}_2^{2}}}\!+\! \frac{N\lambda}{2} {{{\| v \|}_2^2}}\!+\! \frac{Na}{p} { {{\| v \|}_p^p}}\!-\!\text{Re} \int_{{\R^N}} [(\Omega \cdot L) v] (x\cdot \nabla \bar{v})\!=\!0.
\end{align}
To eliminate $\lambda$, we multiply \eqref{Poeq3.6} by $\bar{v}$ and get
\begin{align}\label{p3.5}
\frac{1}{2}  {{{\| {\nabla v} \|}
_2^2}}  \!+\! \frac{1}{2} {{{\| xv|}_2^{2}}}\!-\! \lambda {{{\| v \|}_2^2}}\!-\!a { {{\| v \|}_p^p}}\!-\!\int_{{\R^N}} \bar{v}(\Omega \cdot L) v \!=\!0.
\end{align}
Combine \eqref{p3.4} and \eqref{p3.5}, we have the following Pohozaev identity
\begin{align}\label{p3.6}
\frac{1}{2} {{{\| {\nabla v} \|}_2^2}}  \!-\! \frac{1}{2} {{{\| xv \|}_2^{2}}}\!-\!a\delta_p { {{\| v \|}_p^p}}\!-\!\text{Re} \int_{{\R^N}} [(\Omega \cdot L) v] (x\cdot \nabla \bar{v})\!-\!\frac{N}{2}\int_{{\R^N}} \bar{v}(\Omega \cdot L) v \!=\!0.
\end{align}
The facts $\text{Re} \int_{{\R^N}} [(\Omega \cdot L) v] (x\cdot \nabla \bar{v})=-\text{Re} \int_{{\R^N}} [(\Omega \cdot L) \bar{v}] (x\cdot \nabla {v})$ and
$$
\int_{{\R^N}} [(\Omega \cdot L) v] (x\cdot \nabla \bar{v})=\int_{{\R^N}} [(\Omega \cdot L) \bar{v}] (x\cdot \nabla {v})-N\int_{{\R^N}} \bar{v}(\Omega \cdot L) v
$$
imply that
$$\text{Re} \int_{{\R^N}} [(\Omega \cdot L) v] (x\cdot \nabla \bar{v})=-\frac{N}{2}\int_{{\R^N}} \bar{v}(\Omega \cdot L) v.$$
Therefore, \eqref{p3.6} can be reduced to
\begin{align*}
Q(v):=\frac{1}{2}  {{{\| {\nabla v} \|}_2^2}}  \!-\! \frac{1}{2} {{{\| xv \|}_2^{2}}}\!-\!a\delta_p { {{\| v \|}_p^p}}\!=\!0.
\end{align*}
\end{proof}

We now prove the existence of a local minimizer.
\begin{proof}[\bf Proof of Theorem \ref{the1.5}] First, we show the existence of a local minimizer. It is sufficient to prove ${\mathcal{M}_c^r}\neq\emptyset$. Let $\{u_n\}\subset S(c)\cap B(r)$ be a minimizing sequence for $m_c^r= \mathop {\inf }\limits_{u \in S(c)\cap B(r)} I(u)$, then $\{u_n\}$ is bounded in $\Sigma$. By the compactness of the embedding $\Sigma\hookrightarrow L^q(\R^N,\mathbb{C})$ for $q\in[2,2^*)$, see Lemma \ref{Compact}, there exists $u\in \Sigma$ such that
\[\left\{ {\begin{array}{*{20}{c}}
   {{u_n} \rightharpoonup {u}\text{ in } \Sigma},\\
   {{u_n} \to {u}\text{ in }{L^q({\R^N},\mathbb{C})}},   \\
   {{u_n} \to {u}\text{ a.e. in }{\R^N}}.  \\
\end{array}} \right.\]
Consequently, we have $u\in S(c)\cap B(r)$. Moreover, we deduce from \eqref{boud2.4} that the energy functional $I$ is weakly lower semi-continuous.
Therefore, we have
\begin{align*}
I(u)\leq \lim_{n\to \infty}I(u_n)=m_c^r \leq I(u),
\end{align*}
which gives $I(u)=m_c^r$ and ${{u_n}\to {u}\text{ in } \Sigma}$. This implies that any minimizing sequence for $m_c^r$ is precompact and ${\mathcal{M}_c^r}\neq\emptyset$. For any $u_c\in {\mathcal{M}_c^r}$, Proposition \ref{prop2.3} implies that $u_c \not\in S(c)\cap \partial B(r)$ as $u_c\in B(\nu r)$, where $\partial B(r)\!=\!\Big\{u\!\in\! \Sigma: \|u\|_{\dot{\Sigma}}^2\!=\!r\Big\}$. Then $u_c$ is indeed a critical point of $I|_{S(c)}$. So, there exists a Lagrange multiplier $\omega_{c}\in \R$ such that $(u_{c},\omega_{c})$ is a couple of weak solution to problem \eqref{1.3}.

Next, we estimate the bound of the Lagrange multiplier $\omega_{c}$. Notice that the pure point spectrum of the harmonic oscillator is
\[\sigma_p(-\Delta+|x|^2)=\{\lambda_k=N+2k: k\in \mathbb{N}\}\]
and the corresponding eigenfunctions are given by Hermite functions (denoted by $\psi_k$, associated to $\lambda_k$), which form an orthonormal basis of $L^2(\R^N,\R)$ (see \cite{a}). Let $\psi_0 \in S(1)$ be an eigenfunction with respect to the first eigenvalue $\lambda_0=N$ and $\psi=\sqrt{c}\psi_0\in S(c)$. Then $\psi\in B(r)$ if $c\leq\frac{r}{N}$. As $\psi$ is real valued, we have $\int_{\mathbb{R}^{N}}\bar{\psi}(\Omega \cdot L) \psi d x=0$ and 
\begin{align}\label{3.2}
m_c^r\leq I(\psi)\!=\! \displaystyle\frac{1}{2}\|\psi\|_{\dot{\Sigma}}^2 \displaystyle\!-\!\frac{{2a}}{p}\left\| \psi \right\|_p^p\!-\! \int_{\mathbb{R}^{N}}\bar{\psi}(\Omega \cdot L) \psi d x \!<\! \displaystyle \frac{1}{2} \|\psi\|_{\dot{\Sigma}}^2 \!=\! \frac{1}{2}Nc.
\end{align}
Since $(u_{c},\omega_{c})\in {\mathcal{M}_c^r}\times \R$ weakly solves problem \eqref{1.3}, we learn from \eqref{3.2} that  
\begin{align} \label{lagrangemutiplier}
\omega_c{{{\| { u_c} \|}_2^2}}&=\frac{1}{2}\|u_c\|_{\dot{\Sigma}}^2 \displaystyle\!-\! a { {\| u_c \|_p^p}}\!-\! \int_{\mathbb{R}^{N}}\bar{u}_c(\Omega \cdot L) u_c d x \nonumber \\
&=I(u_c)+ \frac{a(2-p)}{p} { {{\| u_c \|}_p^p}}
<I(u_c)=m_c^r <\frac{1}{2}Nc, 
\end{align}
which implies that $\omega_c<\frac{N}{2}$. On the other hand, by \eqref{equ2.2} and \eqref{boud2.4}, we have
\begin{align*}
\omega_c {{{\| { u_c} \|}_2^2}}&=\frac{1}{2}\|u_c\|_{\dot{\Sigma}}^2 \displaystyle\!-\! a { {{\| u_c \|}_p^p}}\!-\! \int_{\mathbb{R}^{N}}\bar{u}_c(\Omega \cdot L) u_c d x \\
&\geq \displaystyle C_{*}(\Omega) \|u_c\|_{\dot{\Sigma}}^2-a \mathcal{C}^p_{N,p} \left\|\nabla u_c\right\|_{2}^{p\delta_p} c^{\frac{p(1-\delta_p)}{2}}\geq \displaystyle C_{*}(\Omega) \|u_c\|_{\dot{\Sigma}}^2 -a \mathcal{C}^p_{N,p} \|u_c\|_{\dot{\Sigma}}^{p\delta_p} c^{\frac{p(1-\delta_p)}{2}} \nonumber\\
&=\displaystyle\|u_c\|_{\dot{\Sigma}}^2 \Big(C_{*}(\Omega)-a \mathcal{C}^p_{N,p} \|u_c\|_{\dot{\Sigma}}^{p\delta_p-2} c^{\frac{p(1-\delta_p)}{2}}\Big)\geq \displaystyle Nc\Big(C_{*}(\Omega)-a \mathcal{C}^p_{N,p} r^{\frac{p\delta_p-2}{2}} c^{\frac{p(1-\delta_p)}{2}}\Big),
\end{align*} 
which implies that $\omega_c\geq N\Big(\frac{1-{|\Omega|}^2}{2(1+3|\Omega|)}-a \mathcal{C}^p_{N,p} r^{\frac{p\delta_p-2}{2}} c^{\frac{p(1-\delta_p)}{2}}\Big)>0$ as $c<c_0$, see \eqref{cbound2.5}.

Finally, we show that
\[\mathop {\sup }\limits_{u \in {\mathcal{M}_c^r}} \left\| {u - l_0{\psi _0}} \right\|_{ \Sigma }^2=O(c+c^{\frac{p(1-\delta_p)}{2}}) .\]
For any ${u \in {\mathcal{M}_c^r}}$, we rewrite $u=u_1+iu_2$, it results to
\[u=\sum_{k=0}^\infty {\big(\int_{{\R^N}} {u_1\psi_k}\big)\psi_k}+i\sum_{k=0}^\infty {\big(\int_{{\R^N}} {u_2\psi_k}\big)\psi_k} =\sum_{k=0}^\infty {l_k\psi_k} \text{ with } l_k=\int_{{\R^N}} {u\psi_k},\]
where $u_1$ is the real part and  $u_2$ is the imaginary part of $u$, $\{\psi_k\}$ is an orthonormal basis of $L^2(\R^N,\R)$. Thus
\[c={\|u\|_2^2}=\sum_{k=0}^\infty {l_k\bar{l}_k\int_{{\R^N}} {|\psi_k|^2}}=\sum_{k=0}^\infty {{|l_k|}^2},\]
where $\bar{l}_k$ is the conjugate of $l_k$. Since ${u \in {\mathcal{M}_c^r}}\subset B(r)$, by \eqref{equ2.2} and \eqref{boud2.4}, we have
\begin{align*}\nonumber
\frac{Nc}{2}>I(u)&= \displaystyle\frac{1}{2}\|u\|_{\dot{\Sigma}}^2 - \frac{{2a}}{p} { {{\| u \|}_p^p}}- \int_{\mathbb{R}^{N}}\bar{u}(\Omega \cdot L) u d x \\
&\geq \displaystyle C_{*}(\Omega) \|u\|_{\dot{\Sigma}}^2-\frac{{2a}}{p} \mathcal{C}^p_{N,p} \left\|\nabla u\right\|_{2}^{p\delta_p} c^{\frac{p(1-\delta_p)}{2}}
\geq \displaystyle C_{*}(\Omega) \|u\|_{\dot{\Sigma}}^2 -\frac{{2a}}{p} \mathcal{C}^p_{N,p} \|u\|_{\dot{\Sigma}}^{p\delta_p} c^{\frac{p(1-\delta_p)}{2}}  \\
&\geq \displaystyle C_{*}(\Omega) \|u\|_{\dot{\Sigma}}^2-\frac{{2a}}{p} \mathcal{C}^p_{N,p} r^{\frac{p\delta_p}{2}} c^{\frac{p(1-\delta_p)}{2}}
= \displaystyle C_{*}(\Omega)\sum_{k=0}^\infty {\lambda_k{{|l_k|}^2}}-\frac{{2a}}{p} \mathcal{C}^p_{N,p} r^{\frac{p\delta_p}{2}} c^{\frac{p(1-\delta_p)}{2}}
\end{align*}
which implies that
\[ N\sum_{k=1}^\infty {{|l_k|}^2}\leq \sum_{k=1}^\infty {\lambda_k {{|l_k|}^2} }\leq\sum_{k=0}^\infty {\lambda_k {{|l_k|}^2} }\leq \frac{Nc}{2C_{*}(\Omega)}+\frac{2a}{p}
 \frac{ \mathcal{C}^p_{N,p}  }{C_{*}(\Omega)}r^{\frac{p\delta_p}{2}} c^{\frac{p(1-\delta_p)}{2}} \]  
by using \eqref{3.2}. Thus, we have
\[\left\| {u - l_0{\psi _0}} \right\|_{\dot \Sigma }^2 = \left\| \sum_{k=1}^\infty l_k \psi_k\right\|_{\dot \Sigma }^2=\sum_{k=1}^\infty\lambda_k {{|l_k|}^2}\leq \frac{Nc}{2C_{*}(\Omega)}+\frac{2a}{p}
 \frac{ \mathcal{C}^p_{N,p}  }{C_{*}(\Omega)}r^{\frac{p\delta_p}{2}} c^{\frac{p(1-\delta_p)}{2}}\]
and
\[\left\| {u - l_0{\psi _0}} \right\|_2^2 = \left\| \sum_{k=1}^\infty l_k \psi_k\right\|_2^2=\sum_{k=1}^\infty {{|l_k|}^2}\leq \frac{c}{2C_{*}(\Omega)}+\frac{2a}{pN}
 \frac{ \mathcal{C}^p_{N,p}  }{C_{*}(\Omega)}r^{\frac{p\delta_p}{2}} c^{\frac{p(1-\delta_p)}{2}}.\]
Then, it follows from \eqref{consbound} that $\mathop {\sup }\limits_{u \in {\mathcal{M}_c^r}} \left\| {u - l_0{\psi _0}} \right\|_{ \Sigma }^2 \leq (N\!+\!1)\Big[\frac{1+3|\Omega|}{1-{|\Omega|}^2}c
+\frac{4(1+3|\Omega|)a \mathcal{C}^p_{N,p}}{pN(1-{|\Omega|}^2)}
  r^{\frac{p\delta_p}{2}} c^{\frac{p(1-\delta_p)}{2}}\Big]$. So we have $\mathop {\sup }\limits_{u \in {\mathcal{M}_c^r}} \left\| {u - l_0{\psi _0}} \right\|_{ \Sigma }^2=O(c+c^{\frac{p(1-\delta_p)}{2}})$. 
\end{proof}

Next, we show that $u_c $ is a normalized ground state if $c>0$ is sufficiently small. We also concern the asymptotic behavior of $u_c$ obtained by Theorem \ref{the1.5} as $c\to 0^+$.

\begin{proof}[\bf Proof of Theorem \ref{Asymtotic}]

This is motivated by \cite{BNlv}. On the contrary, we assume that there exists a $v\in S(c)$ such that
\[ I^{'}|_{S(c)}(v)=0 \text{ and } I(v)<m_c^r.\]
Since $I^{'}|_{S(c)}(v)=0$, then $v$ satisfies
\begin{align}\label{3.3}
\left(-\frac{1}{2} \Delta+\frac{1}{2}|x|^2-(\Omega \cdot L)\right) v-a|v|^{p-2} v=\lambda v, ~~~~~~~~x\in \R^N
\end{align}
for some $\lambda\in \R$.
It follows from Proposition \ref{pohozave} that $Q(v):=\frac{1}{2}  {{{\| {\nabla v} \|}_2^2}}  \!-\! \frac{1}{2} {{{\| xv \|}_2^{2}}}\!-\!a\delta_p { {{\| v \|}_p^p}}\!=\!0$. Therefore, we have
\[I(v)=\Big(\frac{1}{2}-\frac{1}{p\delta_p} \Big) {{{\| {\nabla v} \|}_2^2}}  \!+\! \Big(\frac{1}{2}+\frac{1}{p\delta_p} \Big) {{{\| xv \|}_2^{2}}}\!-\!\int_{{\R^N}} \bar{v}(\Omega \cdot L) v .\]   
Since $0<|\Omega|<\sqrt{1-(\frac{2}{p\delta_p})^2}$, we can choose $\varepsilon_1\in (\frac{p\delta_p}{p\delta_p+2}|\Omega|^{2},1-\frac{2}{p\delta_p})$ and denote
\begin{align}  \label{cOmega}
C_{1}(\Omega):=\frac{1}{2}-\frac{1}{p\delta_p}-\frac{\varepsilon_1}{2},~~~~ C_{2}(\Omega):=\frac{1}{2}+\frac{1}{p\delta_p}-\frac{|\Omega|^{2}}{2\varepsilon_1},
~~~~\mathcal{C}_\Omega:=\min\{C_{1}(\Omega),C_{2}(\Omega) \}.
\end{align}
It's easy to see that $C_{1}(\Omega)\!>\!0$, $C_{2}(\Omega)\!>\!0$. Applying inequality \eqref{Gn1.2} with $\varepsilon\!=\!\varepsilon_1$, we have
\begin{align*}
I(v)
\!\geq\! C_{1}(\Omega)\|\nabla v \|_{2}^{2}\!+\!C_{2}(\Omega) \|x v \|_{2}^{2} \!\geq\! \min\{C_{1}(\Omega),C_{2}(\Omega) \} \|v\|_{\dot{\Sigma}}^2=\mathcal{C}_\Omega \|v\|_{\dot{\Sigma}}^2.
\end{align*}
Thus, we deduce from \eqref{3.2} that
\[
\mathcal{C}_\Omega \|v\|_{\dot{\Sigma}}^2 \leq I(v) <m_c^r<\frac{Nc}{2}\to 0\ \text{as}\ c\to 0.
\]
If $c$ is sufficiently small, we have $v\in B(r)$ and $I(v)\geq m_c^r$, which contradicts to $I(v)<m_c^r$.

Next, we show that $u_{c} \to 0$ in $\dot{\Sigma}$ as $c\to 0^+$. Since $(u_{c},\omega_{c})\in {\mathcal{M}_c^r}\times \R$ weakly solves \eqref{1.3}, Proposition \ref{pohozave} indicates that $Q(u_{c})=0$.
Then, $I(u_{c})$ can be rewrite as $$I(u_{c})=\Big(\frac{1}{2}-\frac{1}{p\delta_p} \Big) {{{\| {\nabla u_{c}} \|}_2^2}}  \!+\! \Big(\frac{1}{2}+\frac{1}{p\delta_p} \Big) {{{\| xu_{c} \|}_2^{2}}}\!-\!\int_{{\R^N}} \bar{u}_{c}(\Omega \cdot L) u_{c}.$$
Applying inequality \eqref{Gn1.2} with $\varepsilon\!=\!\varepsilon_1$, we also have
\begin{align*}
I({u_{c}})
\!\geq\! C_{1}(\Omega)\|\nabla {u_{c}} \|_{2}^{2}\!+\!C_{2}(\Omega) \|x {u_{c}} \|_{2}^{2} \!\geq\! \min\{C_{1}(\Omega),C_{2}(\Omega) \} \|{u_{c}}\|_{\dot{\Sigma}}^2=\mathcal{C}_\Omega \|{u_{c}}\|_{\dot{\Sigma}}^2.
\end{align*}
Therefore, it holds that $\mathcal{C}_\Omega \|{u_{c}}\|_{\dot{\Sigma}}^2 \leq I({u_{c}})=m_c^r<\frac{Nc}{2}\to 0$ as $c\to 0^+$.

Let $v_c:=\frac{u_{c}}{\|{u_{c}}\|_2}=\frac{u_{c}}{\sqrt{c}}$, then we have the following estimates
\begin{align} \label{UpBod}
\mathcal{C}_\Omega \|{v_{c}}\|_{\dot{\Sigma}}^2 \leq \frac{I({u_{c}})}{\|{u_{c}}\|^2_2}=\frac{m_c^r}{c}
<\frac{N}{2}.
\end{align}
By using \eqref{equ2.2} and \eqref{UpBod}, we deduce that
\[
 0<\frac{\|{u_{c}}\|^p_p}{\|{u_{c}}\|^2_2}\leq  \frac{  \mathcal{C}^p_{N,p} \left\|\nabla u_{c} \right\|_{2}^{p\delta_p} \left\|u_{c}\right\|_{2}^{p(1-\delta_p)} }{ \|{u_{c}}\|^2_2 }=
\mathcal{C}^p_{N,p} \left\|\nabla v_{c} \right\|_{2}^{p\delta_p} \|{u_{c}}\|^{p-2}_2 \leq C c^{\frac{p-2}{2}} \to 0
\]
as $c\to0^+$, where $C=C(p,N,|\Omega|)>0$ is some constant. From Proposition \ref{pohozave}, we have
$$0=\frac{Q(u_{c})}{ \|{u_{c}}\|^2_2 }=\frac{1}{2}  {{{\| {\nabla v_{c}} \|}_2^2}}  \!-\! \frac{1}{2} {{{\| xv_{c} \|}_2^{2}}}\!-\!a\delta_p \frac{\|{u_{c}}\|^p_p}{\|{u_{c}}\|^2_2},$$
which gives
$\mathop {\lim }\limits_{c\to 0^+}{{{\| {\nabla v_{c}} \|}_2^2}}=\mathop {\lim }\limits_{c\to 0^+}{{{\| {x v_{c}} \|}_2^2}}$. Since $  N\Big(\frac{1-{|\Omega|}^2}{2(1+3|\Omega|)}-a \mathcal{C}^p_{N,p} r^{\frac{p\delta_p-2}{2}} c^{\frac{p(1-\delta_p)}{2}}\Big)\!\leq\!\omega_c\!<\!
\frac{N}{2}$, then there exists an $\omega\in[ \frac{(1-{|\Omega|}^2)N}{2(1+3|\Omega|)}, \frac{N}{2}]$ such that $\mathop {\lim }\limits_{c\to 0^+}\omega_c=\omega$ as $c\to0^+$. By these facts and \eqref{lagrangemutiplier}, we have
\begin{align*}
\mathop {\lim }\limits_{c\to 0^+}\omega_c =\mathop {\lim }\limits_{c\to 0^+} \Big[ \frac{1}{2}\|v_c\|_{\dot{\Sigma}}^2 \!-\! \int_{\mathbb{R}^{N}}\bar{v}_c(\Omega \cdot L) v_c d x\!-\! a \frac{\|{u_{c}}\|^p_p}{\|{u_{c}}\|^2_2}\Big]=\mathop {\lim }\limits_{c\to 0^+} \Big[ \frac{1}{2}\|v_c\|_{\dot{\Sigma}}^2 \!-\! \int_{\mathbb{R}^{N}}\bar{v}_c(\Omega \cdot L) v_c d x\Big], \\
\mathop {\lim }\limits_{c\to 0^+}\frac{m_c^r}{c}
=\mathop {\lim }\limits_{c\to 0^+} \Big[\frac{1}{2}\|v_{c}\|_{\dot{\Sigma}}^2 \!-\! \int_{\mathbb{R}^{N}}\bar{v}_{c}(\Omega \cdot L) v_{c} d x\!-\! \frac{{2a}}{p}\frac{\|{u_{c}}\|^p_p}{\|{u_{c}}\|^2_2}\Big]=\mathop {\lim }\limits_{c\to 0^+} \Big[\frac{1}{2}\|v_{c}\|_{\dot{\Sigma}}^2 \!-\! \int_{\mathbb{R}^{N}}\bar{v}_{c}(\Omega \cdot L) v_{c} d x\Big].
\end{align*}
Finally, we deduce that $ \mathop {\lim }\limits_{c\to 0^+}\frac{m_c^r}{c}=\mathop {\lim }\limits_{c\to 0^+}\omega_c=\omega$ and
$$
\mathop {\lim }\limits_{c\to 0^+}  \frac{\|\nabla u_{c}\|_2^2\!-\! \int_{\mathbb{R}^{N}}\bar{u}_{c}(\Omega \cdot L) u_{c} d x}{c} =\mathop {\lim }\limits_{c\to 0^+}  \frac{\|x u_{c}\|_2^2\!-\! \int_{\mathbb{R}^{N}}\bar{u}_{c}(\Omega \cdot L) u_{c} d x}{c} =\omega.$$
\end{proof}

At the end of this Section, we prove Theorem \ref{stability}, i.e. the stability of ${\mathcal{M}_c^r}$.

\begin{proof}[\bf Proof of Theorem \ref{stability}]
Just suppose that there exists an $\varepsilon_0>0$, a sequence of initial data $\{u_n^0\}\subset \Sigma$ and a sequence $\{t_n\}\subset \R^+$ such that the unique solution $u_n$ of problem \eqref{1.1} with initial data $u_n(0,\cdot)=u_n^0(\cdot)$ satisfies
\[\text{dist}_\Sigma(u_n^0,{\mathcal{M}_c^r})< \frac{1}{n} \text{ and } \text{ dist}_\Sigma\Big(u_n(t_n,\cdot),{\mathcal{M}_c^r}\Big)\geq \varepsilon_0.\]
Without loss of generality, we may assume that $\{u_n^0\}\subset S(c)$.
Since $\text{dist}_\Sigma(u_n^0,{\mathcal{M}_c^r})\to 0$ as $n\to\infty$, the conservation laws of the energy and mass imply that
$\{u_n(t_n,\cdot)\}$ is a minimizing sequence for $m_c^r= \mathop {\inf }\limits_{u \in S(c)\cap B(r)} I(u)$ provided $\{u_n(t_n,\cdot)\}\subset B(r)$. Indeed, if $\{u_n(t_n,\cdot)\}\subset (\Sigma \setminus B(r))$, then by the continuity there exists ${\bar{t}}_n\in [0,t_n)$ such that $\{u_n({\bar{t}}_n,\cdot)\}\subset {\partial B(r)}$, where
 $\partial B(r)\!=\!\Big\{u\!\in\! \Sigma: \|u\|_{\dot{\Sigma}}^2\!=\!r\Big\}$.
Hence by Proposition \ref{prop2.3},
\[I(u_n({\bar{t}}_n,\cdot))\geq \mathop {\inf }\limits_{u \in S(c)\cap {\partial B(r)}} I(u)> \mathop {\inf }\limits_{u \in S(c)\cap B(\nu r)} I(u)=\mathop {\inf }\limits_{u \in S(c)\cap B(r)} I(u)=m_c^r,\]
which is a contradiction. Therefore, $\{u_n(t_n,\cdot)\}$ is a minimizing sequence for $m_c^r$.
Then there exists $v_0 \in {\mathcal{M}_c^r}$ such that $u_n(t_n,\cdot)\to v_0$ in $\Sigma$, which contradicts to \[\text{dist}_\Sigma\Big(u_n(t_n,\cdot),{\mathcal{M}_c^r}\Big)\geq \varepsilon_0.\]
\end{proof}

\vspace{.25cm}
\section{Proof of Theorems \ref{the1.8}}
In this section, we prove Theorem \ref{the1.8}, i.e. the existence of a mountain pass solution. Let us fix $u_c\in {\mathcal{M}_c^r}$ and
$v_c(x)=l^{\frac{N}{2}}u_c(lx)$ for $l>>1$ such that $v_c\in S(c)\setminus B(r)$ and $I(v_c)<0$ (${\mathcal{M}_c^r}$ is defined in \eqref{1.9++}). First, we introduce a min-max class
\begin{align}\label{4.2}
{ \Gamma}(c): = \{ g\in C([0,1], S(c)) : g(0)=u_c \text{ and } g(1)=v_c\}
\end{align}
and a min-max value
\begin{align}\label{4.3}
{\gamma}(c): = \mathop {\inf }\limits_{ g \in {\Gamma}(c)} \mathop {\max }\limits_{0 \le t \le 1}  I(g(t)).
\end{align}
Notice that ${ \Gamma}(c)\neq \emptyset$, for $g(t)=(1+tl-t)^{\frac{N}{2}}u_c(x+t(l-1)x) \in { \Gamma}(c)$. By \eqref{4.2} and Proposition \ref{prop2.3}, we have
\begin{align}\label{4.4}
{\gamma}(c)>\max\{I(u_c),I(v_c)\}>0.
\end{align}

Next, we introduce an auxiliary functional $\widetilde I:S(c) \!\times \! \R \!\to\! \R,~~~~(u,\theta ) \!\to\! I(\kappa (u,\theta ))$ for $\kappa (u,\theta ): =\! {e^{\frac{N}{2}\theta }}u({e^\theta }x)$. To be precise, we have
\begin{align*}
\dis \widetilde I(u,\theta)=I(\kappa (u,\theta ))= \frac{e^{2\theta }}{2} {{{\| {\nabla u} \|}_2^2}}  + \frac{1}{2e^{2\theta }} {{{\| xu \|}_2^2}}- \frac{{2a}}{p}e^{p\delta_p\theta} { {{\| u \|}_p^p}}-\int_{{\R^N}} \bar{u}(\Omega \cdot L) u.
\end{align*}
Define a set of paths
\begin{align}\label{4.1}
{\widetilde \Gamma}(c): = \{ \widetilde g\in C([0,1], S(c) \times \R ) : \widetilde g(0)=(u_c,0) \text{ and } \widetilde g(1)=(v_c,0)\}
\end{align}
and a minimax value
\[{\widetilde\gamma}(c): = \mathop {\inf }\limits_{\widetilde g \in {\widetilde \Gamma}(c)} \mathop {\max }\limits_{0 \le t \le 1} \widetilde I(\widetilde g(t)),\]
we claim that ${\widetilde\gamma}(c)={\gamma}(c)$. In fact, it follows immediately from the definition of ${\widetilde\gamma}(c)$ and ${\gamma}(c)$ along with the fact that the maps
\[\varphi :{\Gamma(c)} \to {\widetilde\Gamma(c)},\text{ } g \to \varphi (g): = (g,0)~~~~~~~~\mbox{and}~~~~~~~~\psi :{\widetilde\Gamma(c)} \to {\Gamma(c)},\text{ } \widetilde g \to \psi (\widetilde g): = \kappa  \circ \widetilde g\]
satisfy \[\widetilde I(\varphi (g)) = I(g) \text{ and } I(\psi (\widetilde g)) = \widetilde I(\widetilde g).\]

Denote $|r|_{\R}\!=\!|r|$ for $r\!\in \!\R$, $E:=\!\Sigma \!\times\! \R$ endowed with the norm $\left\|  \cdot  \right\|_E^2 \!=\! {\left\|  \cdot  \right\|_{\Sigma}^2} \!+\! \left|  \cdot  \right|_\R^2$ and ${E^ {-1} }$ the dual space of $E$. We give two useful Lemmas.

\begin{lemma} \label{lem4.2}(\cite{j}, Lemma 2.3)
Let $\varepsilon>0$. Suppose that ${\widetilde g_0} \in {\widetilde\Gamma(c)}$ satisfies \[\mathop {\max }\limits_{0 \le t \le 1} \widetilde I({\widetilde g_0}(t)) \le {\widetilde\gamma}(c) + \varepsilon .\]
Then there exists a pair of $({u_0},{\theta _0}) \in S(c) \times \R$ such that:

(1)~~~~$\widetilde I({u_0},{\theta _0}) \in [{\widetilde\gamma}(c) - \varepsilon ,{\widetilde\gamma}(c) + \varepsilon]$;

(2)~~~~$\mathop {\min }\limits_{0 \le t \le 1} {\left\| {({u_0},{\theta _0}) - {{\widetilde g}_0}(t)} \right\|_E} \le \sqrt \varepsilon$;

(3)~~~~${\left\| {{{\left. {{{\widetilde I}^{'}}} \right|}_{{S}(c) \times \R}}({u_0},{\theta _0})} \right\|_{{E^ {-1} }}} \le 2\sqrt \varepsilon
, \text{ }i.e.~~\left| {{{\left\langle {{{\widetilde I}^{'}}({u_0},{\theta _0}),z} \right\rangle }_{{E^ {-1} } \times E}}} \right| \le 2\sqrt \varepsilon  {\left\| z \right\|_E}$ holds, for all \[z \in {\widetilde T_{({u_0},{\theta _0})}}: = \{ ({z_1},{z_2}) \in E,{\left\langle {{u_0},{z_1}} \right\rangle _{{L^2}}} = 0\}. \]
\end{lemma}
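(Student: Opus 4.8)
This is Lemma 2.3 of \cite{j}; I sketch how I would argue it, the plan being to combine Ekeland's variational principle on the space of paths with a quantitative, mass-preserving deformation. First I would record that $\widetilde I$ is of class $C^1$ on $S(c)\times\R$: from the explicit formula for $\widetilde I$ displayed above, each term is $C^1$ in $u\in\Sigma$ --- the rotation and potential terms being bounded quadratic forms on $\Sigma$ and $u\mapsto\|u\|_p^p$ being $C^1$ since $\Sigma\hookrightarrow L^p$ --- and smooth in $\theta$; moreover $S(c)$ is a smooth submanifold of $\Sigma$ since the mass functional $u\mapsto\|u\|_2^2$ has non-vanishing derivative on $S(c)$. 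Then I would equip $\widetilde\Gamma(c)$ with the complete metric $d(\widetilde g_1,\widetilde g_2):=\max_{0\le t\le1}\|\widetilde g_1(t)-\widetilde g_2(t)\|_E$ and apply Ekeland's variational principle to the continuous functional $\mathcal J(\widetilde g):=\max_{0\le t\le1}\widetilde I(\widetilde g(t))$, which by \eqref{4.4} is bounded below with $\inf_{\widetilde\Gamma(c)}\mathcal J=\widetilde\gamma(c)=\gamma(c)>0$ and $\mathcal J(\widetilde g_0)\le\widetilde\gamma(c)+\varepsilon$. This produces $\widetilde g\in\widetilde\Gamma(c)$ with $\widetilde\gamma(c)\le\mathcal J(\widetilde g)\le\widetilde\gamma(c)+\varepsilon$, $d(\widetilde g,\widetilde g_0)\le\sqrt\varepsilon$, and the approximate minimality $\mathcal J(\widetilde h)>\mathcal J(\widetilde g)-\sqrt\varepsilon\,d(\widetilde h,\widetilde g)$ for every $\widetilde h\in\widetilde\Gamma(c)\setminus\{\widetilde g\}$.

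Next I would introduce the max-set $B:=\{t\in[0,1]:\widetilde I(\widetilde g(t))=\mathcal J(\widetilde g)\}$, which is nonempty and compact and avoids the endpoints of $[0,1]$ because $\widetilde I(u_c,0)=I(u_c)<\gamma(c)$ and $\widetilde I(v_c,0)=I(v_c)<0<\gamma(c)$. The key claim is that some $t_0\in B$ makes $(u_0,\theta_0):=\widetilde g(t_0)$ an approximate constrained critical point, namely $\|\widetilde I'|_{S(c)\times\R}(u_0,\theta_0)\|_{E^{-1}}\le2\sqrt\varepsilon$. Granting this, conclusion (1) follows from $\widetilde I(u_0,\theta_0)=\mathcal J(\widetilde g)\in[\widetilde\gamma(c),\widetilde\gamma(c)+\varepsilon]$; conclusion (2) from $\min_{0\le t\le1}\|(u_0,\theta_0)-\widetilde g_0(t)\|_E\le\|\widetilde g(t_0)-\widetilde g_0(t_0)\|_E\le d(\widetilde g,\widetilde g_0)\le\sqrt\varepsilon$; and (3) is the claim itself.

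It remains to prove the claim, which I would do by contradiction. If $\|\widetilde I'|_{S(c)\times\R}(\widetilde g(t))\|_{E^{-1}}>2\sqrt\varepsilon$ for all $t\in B$, then by continuity the same estimate holds on an open neighbourhood $\mathcal O$ of the compact set $\widetilde g(B)$ in $S(c)\times\R$. On $\mathcal O$ one builds, from a pseudo-gradient of $\widetilde I$ on the manifold, a locally Lipschitz vector field $\mathcal W$ tangent to $S(c)\times\R$ with $\|\mathcal W\|_E\le1$ and $\langle\widetilde I'(w),\mathcal W(w)\rangle$ sufficiently negative (of order $-\sqrt\varepsilon$) on $\mathcal O$, cuts it off to vanish outside $\mathcal O$, and lets $\eta_s$ be the associated flow; since $\mathcal W$ is tangent to the constraint, $\eta_s$ preserves $S(c)$ and the $\R$-factor, hence maps $\widetilde\Gamma(c)$-paths to $\widetilde\Gamma(c)$-paths once the endpoints are untouched. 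Setting $\widetilde h(t):=\eta_{\sigma(t)}(\widetilde g(t))$ for a cutoff $\sigma:[0,1]\to[0,\sqrt\varepsilon]$ depending on the value $\widetilde I(\widetilde g(t))$ --- equal to a fixed small multiple of $\sqrt\varepsilon$ when $\widetilde I(\widetilde g(t))$ is close to $\mathcal J(\widetilde g)$ and vanishing when it is bounded away from $\mathcal J(\widetilde g)$ --- one gets $\widetilde h\in\widetilde\Gamma(c)$ and, from the descent estimate along the flow, $\mathcal J(\widetilde h)<\mathcal J(\widetilde g)-\sqrt\varepsilon\,d(\widetilde h,\widetilde g)$, contradicting the approximate minimality of $\widetilde g$. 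The main obstacle is precisely this deformation step: one must produce a pseudo-gradient field genuinely tangent to the infinite-dimensional mass-constraint manifold (so the flow never leaves $S(c)$) and tune the value-dependent reparametrization $\sigma$ together with the normalization of $\mathcal W$ so that the order-$2\varepsilon$ energy drop near $B$ --- this is where the factor $2$ in the gradient hypothesis is consumed --- strictly dominates $\sqrt\varepsilon\,d(\widetilde h,\widetilde g)\le\varepsilon$ uniformly in $t$, including on the transition zone where $\sigma$ is neither $0$ nor maximal; the remaining ingredients (completeness of $(\widetilde\Gamma(c),d)$, continuity and boundedness below of $\mathcal J$, $C^1$-regularity of $\widetilde I$) are routine.
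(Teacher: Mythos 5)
The paper does not actually prove this lemma: it is quoted verbatim from \cite{j}, Lemma~2.3, and used as a black box, so there is no in-paper argument to compare against. Your sketch --- Ekeland's variational principle on the complete metric space of paths $(\widetilde\Gamma(c),d)$ applied to $\mathcal{J}(\widetilde g)=\max_{0\le t\le1}\widetilde I(\widetilde g(t))$, followed by a contradiction via a mass-preserving pseudo-gradient deformation supported near the max-set $B$ --- is exactly the standard route by which the cited result is established, and the bookkeeping you describe is the right bookkeeping: the factor $2$ in the gradient bound is spent to beat the Ekeland penalty $\sqrt\varepsilon\,d(\widetilde h,\widetilde g)\le\varepsilon$, and the value-dependent cutoff $\sigma$ keeps the endpoints fixed because $\max\{I(u_c),I(v_c)\}<\gamma(c)\le\mathcal{J}(\widetilde g)$ by \eqref{4.4}. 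The only place where your write-up is a plan rather than a proof is the construction of the locally Lipschitz pseudo-gradient field genuinely tangent to the mass constraint (so that the flow stays in $S(c)\times\R$), which is the entire technical content of the quantitative deformation lemma on this $C^1$ Hilbert submanifold; that construction is standard, but it is precisely what you would have to supply to make the argument self-contained rather than a faithful reconstruction of the proof in \cite{j}.
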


\begin{lemma} \label{lem4.4}(\cite{bl2}, Lemma 3)
Let $I \in {C^1(\Sigma,\R)}$. If $\left\{ {{v_n}} \right\} \subset S(c)$ is bounded in $\Sigma$, then
\[{\left. {{I^{'}}} \right|_{S(c)}}\left( {{v_n}} \right)\to 0 {\text{ in }}\Sigma^{-1}\Longleftrightarrow {I^{'}}\left( {{v_n}} \right) -   \frac{1}{c}\langle {I^{'}}\left( {{v_n}} \right),{v_n}\rangle {v_n} \to 0 {\text{ in }}\Sigma^{-1} \text{ as }n\rightarrow\infty.\]
\end{lemma}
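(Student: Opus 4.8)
The plan is to exploit the standard Lagrange-multiplier structure of the mass constraint. Recall that $S(c)$ is a $C^1$ submanifold of $\Sigma$ of codimension one, with tangent space at $u\in S(c)$ equal to $T_u:=\{w\in\Sigma:\ \mathrm{Re}\int_{\R^N}u\bar w\,dx=0\}$, and that by definition $\langle I'|_{S(c)}(u),w\rangle=\langle I'(u),w\rangle$ for every $w\in T_u$, so $\|I'|_{S(c)}(u)\|$ is nothing but the operator norm of $I'(u)$ restricted to $T_u$ (with the $\Sigma$-norm on $T_u$). For $v_n\in S(c)$ put $\mu_n:=\frac1c\langle I'(v_n),v_n\rangle$; this is well defined since $I\in C^1(\Sigma,\R)$, and because $\|v_n\|_2^2=c$ it is precisely the choice making $\langle I'(v_n)-\mu_n v_n,v_n\rangle=0$ (pairing $v_n\in\Sigma\hookrightarrow L^2\hookrightarrow\Sigma^{-1}$). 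The lemma then reduces to showing that each of the two quantities $\|I'|_{S(c)}(v_n)\|$ and $\|I'(v_n)-\mu_n v_n\|_{\Sigma^{-1}}$ controls the other up to a constant independent of $n$.

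First I would dispatch the easy implication: if $I'(v_n)-\mu_n v_n\to0$ in $\Sigma^{-1}$, then for $w\in T_{v_n}$ with $\|w\|_\Sigma\le1$ one has $\langle I'(v_n),w\rangle=\langle I'(v_n)-\mu_n v_n,w\rangle+\mu_n\,\mathrm{Re}\int_{\R^N}v_n\bar w\,dx=\langle I'(v_n)-\mu_n v_n,w\rangle$, whence $|\langle I'|_{S(c)}(v_n),w\rangle|\le\|I'(v_n)-\mu_n v_n\|_{\Sigma^{-1}}$; taking the supremum over such $w$ gives $I'|_{S(c)}(v_n)\to0$. For the converse I would take an arbitrary $\varphi\in\Sigma$ and split it along the normal direction $v_n$ and the tangent space: write $\varphi=w_n+s_n v_n$ with $s_n:=\frac1c\,\mathrm{Re}\int_{\R^N}v_n\bar\varphi\,dx$ and $w_n:=\varphi-s_n v_n\in T_{v_n}$. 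Then $\langle I'(v_n)-\mu_n v_n,\varphi\rangle=\langle I'(v_n)-\mu_n v_n,w_n\rangle+s_n\langle I'(v_n)-\mu_n v_n,v_n\rangle$, where the second term vanishes by the choice of $\mu_n$ and the first equals $\langle I'(v_n),w_n\rangle=\langle I'|_{S(c)}(v_n),w_n\rangle$ since $w_n$ is tangent; hence $|\langle I'(v_n)-\mu_n v_n,\varphi\rangle|\le\|I'|_{S(c)}(v_n)\|\,\|w_n\|_\Sigma$.

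The one genuine point — and the place where the hypothesis that $\{v_n\}$ is bounded in $\Sigma$ (and that $c>0$ is fixed) is indispensable — is the uniform estimate $\|w_n\|_\Sigma\le C\|\varphi\|_\Sigma$. From Cauchy--Schwarz, $|s_n|\le c^{-1}\|v_n\|_2\|\varphi\|_2=c^{-1/2}\|\varphi\|_2\le c^{-1/2}\|\varphi\|_\Sigma$, and with $M:=\sup_n\|v_n\|_\Sigma<\infty$ this yields $\|w_n\|_\Sigma\le\|\varphi\|_\Sigma+|s_n|\,\|v_n\|_\Sigma\le(1+Mc^{-1/2})\|\varphi\|_\Sigma$, so $\|I'(v_n)-\mu_n v_n\|_{\Sigma^{-1}}\le(1+Mc^{-1/2})\|I'|_{S(c)}(v_n)\|\to0$. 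In other words, the argument amounts to checking that the $L^2$-orthogonal projection onto $T_{v_n}$ is a uniformly bounded operator on $\Sigma$, which would fail if $\|v_n\|_\Sigma\to\infty$; all the remaining manipulations are routine bookkeeping between the $L^2$- and $\Sigma^{-1}\times\Sigma$-pairings.
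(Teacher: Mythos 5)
Your proof is correct and complete: the decomposition $\varphi=w_n+s_nv_n$ with $s_n=\frac1c\,\mathrm{Re}\int v_n\bar\varphi$, the observation that $\mu_n$ is chosen precisely so that $\langle I'(v_n)-\mu_n v_n,v_n\rangle=0$, and the uniform bound $\|w_n\|_\Sigma\le(1+Mc^{-1/2})\|\varphi\|_\Sigma$ coming from the $\Sigma$-boundedness of $\{v_n\}$ constitute exactly the standard argument. The paper does not prove this lemma but cites it from Berestycki--Lions (Lemma 3), and your write-up is the same projection argument used there, with the role of the boundedness hypothesis correctly identified.
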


Then, we construct a special Palai-Smale sequence for ${\gamma}(c)$ defined by \eqref{4.3} and show the compactness of the corresponding Palai-Smale sequence.

\begin{proposition} \label{pro4.3}
Let $a \!>\!0$, $N\!=\!2,3$, $2\!+\!\frac{4}{N}\!<\!p\!<\!2^*$, $0\!<\!|\Omega|\!<\!1$ and $c\!<\!c_0$ for $c_0$ obtained by Theorem \ref{the1.5}. Then, there exists a sequence $\{ {v_n}\}\!\subset\! S(c)$ such that
\begin{equation}\label{4.5}
\begin{array}{rl}
\displaystyle
\left\{ {\begin{array}{*{20}{c}}
   {I({v_n}) \to {\gamma}(c),}  \\
   {{{\left. {{I^{'}}} \right|}_{S(c)}}({v_n}) \to 0,}  \\
   {Q({v_n}) \to 0}  \\
\end{array}} \right.
\end{array}
\end{equation}
as $n \!\to\! +\infty$, where $Q(v_n)=\frac{1}{2}  {{{\| {\nabla v_n} \|}_2^2}}  \!-\! \frac{1}{2} {{{\| xv_n \|}_2^{2}}}\!-\!a\delta_p { {{\| v_n \|}_p^p}}$.
\end{proposition}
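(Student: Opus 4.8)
The plan is to follow the fibering approach of Jeanjean: one works with the auxiliary functional $\widetilde I$ on the enlarged constraint $S(c)\times\R$, in which the scaling direction $\theta$ plays the role of the Pohozaev direction. I would first record the two structural facts that drive the argument: the equality $\widetilde\gamma(c)=\gamma(c)$, already established above, and the identity $\partial_\theta\widetilde I(u,\theta)=2\,Q(\kappa(u,\theta))$, which follows by differentiating the explicit formula for $\widetilde I$ together with the observation that the rotation term is invariant under the dilation $u\mapsto\kappa(u,\theta)$: since $\Omega\cdot L=-i|\Omega|(x_1\partial_{x_2}-x_2\partial_{x_1})$ commutes with dilations, a change of variables gives $\int_{\R^N}\overline{\kappa(u,\theta)}(\Omega\cdot L)\kappa(u,\theta)=\int_{\R^N}\bar u(\Omega\cdot L)u$ for every $\theta$, so this term drops out of $\partial_\theta\widetilde I$. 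In particular $Q(\kappa(u,\theta))$ measures exactly half the rate of change of $\widetilde I$ along the fiber through $(u,\theta)$.

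Next, for each $n$ I would choose $g_n\in\Gamma(c)$ with $\max_{0\le t\le1}I(g_n(t))\le\gamma(c)+\tfrac1n$ and lift it to $\widetilde g_n:=\varphi(g_n)=(g_n,0)\in\widetilde\Gamma(c)$; since $\widetilde I(\varphi(g)(t))=I(g(t))$, the path $\widetilde g_n$ is $\tfrac1n$-almost optimal for $\widetilde\gamma(c)$. Applying Lemma \ref{lem4.2} with $\varepsilon=\tfrac1n$ produces $(u_n,\theta_n)\in S(c)\times\R$ such that $\widetilde I(u_n,\theta_n)\to\widetilde\gamma(c)=\gamma(c)$, $\min_{0\le t\le1}\|(u_n,\theta_n)-\widetilde g_n(t)\|_E\le\tfrac1{\sqrt n}$, and $\|\widetilde I'|_{S(c)\times\R}(u_n,\theta_n)\|_{E^{-1}}\le\tfrac2{\sqrt n}$. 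Since the second component of $\widetilde g_n$ vanishes identically, the distance estimate forces $|\theta_n|\le\tfrac1{\sqrt n}\to0$. Setting $v_n:=\kappa(u_n,\theta_n)\in S(c)$ we get $I(v_n)=\widetilde I(u_n,\theta_n)\to\gamma(c)$, the first limit in \eqref{4.5}. Testing the derivative bound on $z=(0,1)\in\widetilde T_{(u_n,\theta_n)}$ gives $|\partial_\theta\widetilde I(u_n,\theta_n)|\le\tfrac2{\sqrt n}$, whence $Q(v_n)=\tfrac12\partial_\theta\widetilde I(u_n,\theta_n)\to0$, the third limit.

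It remains to get $I'|_{S(c)}(v_n)\to0$ in $\Sigma^{-1}$. Given $\psi\in T_{v_n}S(c)$, i.e. $\langle v_n,\psi\rangle_{L^2}=0$, I set $\varphi:=\kappa(\psi,-\theta_n)$; since $\kappa(\cdot,\theta)$ is an $L^2$-isometry one has $\langle u_n,\varphi\rangle_{L^2}=0$ and $\kappa(\varphi,\theta_n)=\psi$, and since $|\theta_n|$ stays bounded there is a constant $C$ independent of $n$ with $\|\varphi\|_\Sigma\le C\|\psi\|_\Sigma$. Using $\partial_u\widetilde I(u_n,\theta_n)[\varphi]=\langle I'(\kappa(u_n,\theta_n)),\kappa(\varphi,\theta_n)\rangle=\langle I'(v_n),\psi\rangle$ and $(\varphi,0)\in\widetilde T_{(u_n,\theta_n)}$ in the derivative bound, we obtain $|\langle I'(v_n),\psi\rangle|\le\tfrac2{\sqrt n}\|\varphi\|_\Sigma\le\tfrac{2C}{\sqrt n}\|\psi\|_\Sigma$ for all such $\psi$, hence $\|I'|_{S(c)}(v_n)\|_{\Sigma^{-1}}\le\tfrac{2C}{\sqrt n}\to0$. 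Together with the two limits above this proves \eqref{4.5}.

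I expect the work to be essentially bookkeeping rather than conceptual: the delicate points are the precise scaling identities (so that $\partial_\theta\widetilde I=2\,Q\circ\kappa$ holds exactly and the rotation term really cancels), the differentiation formula $\partial_u\widetilde I(u,\theta)[\varphi]=\langle I'(\kappa(u,\theta)),\kappa(\varphi,\theta)\rangle$, and the uniform bound on $\kappa(\cdot,\theta_n)$ and its inverse as operators on $\Sigma$ and on the relevant tangent spaces, where boundedness of $|\theta_n|$ (in fact $|\theta_n|\to0$) is used. The genuinely hard analytic step --- showing that this Palais--Smale sequence is bounded in $\Sigma$ and relatively compact --- is not part of the present statement, and, as flagged in the introduction, is where the stronger hypothesis $0<|\Omega|<\sqrt{1-(2/(p\delta_p))^2}$ will be needed.
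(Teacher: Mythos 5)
Your proposal is correct and follows essentially the same route as the paper: lift an almost-optimal path to $(g_n,0)\in\widetilde\Gamma(c)$, apply Lemma \ref{lem4.2} with $\varepsilon=\tfrac1n$, set $v_n=\kappa(u_n,\theta_n)$, read off $Q(v_n)\to0$ from the $\theta$-derivative tested on $(0,1)$, and transfer the constrained derivative bound via $\widetilde\eta=\kappa(\eta,-\theta_n)$ using $|\theta_n|\le n^{-1/2}$. All the key identities you flag (dilation invariance of the rotation term, $2Q(v_n)=\langle\widetilde I'(u_n,\theta_n),(0,1)\rangle$, the $L^2$-orthogonality correspondence between tangent spaces) are exactly the ones the paper uses.
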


\begin{proof}
By the definition of ${{\gamma}(c)}$, there exists a ${g_n} \in {\Gamma(c)}$ such that
\[ {\gamma}(c) \le \mathop {\max }\limits_{0 \le t \le 1} I({g_n}(t)) \le {\gamma}(c) + \frac{1}{n},~~~~~~~~\forall n \in {\mathbb{N}^ + }.\]
Since ${\widetilde\gamma}(c) = {\gamma}(c),\text{ }{\widetilde g_n} = ({g_n},0) \in {\widetilde\Gamma(c)}$, we have
$\mathop {\max }\limits_{0 \le t \le 1} \widetilde I({\widetilde g_n}(t)) \le {\widetilde\gamma}(c) + \frac{1}{n}$.
Therefore, Lemma \ref{lem4.2} indicates the existence of a sequence $\{ ({u_n},{\theta_n})\}  \subset S(c) \times \R$ such that

$(i)~~~~\widetilde I({u_n},{\theta _n}) \in [{\gamma}(c) - \frac{1}{n},{\gamma}(c) + \frac{1}{n}]$;

$(ii)~~~~\mathop {\min }\limits_{0 \le t \le 1} {\left\| {({u_n},{\theta _n}) - ({{ g}_n}(t)},0) \right\|_E} \le \sqrt \frac{1}{n}$;

$(iii)~~~~{\left\| {{{\left. {{{\widetilde I}^{'}}} \right|}_{S(c) \times \R}}({u_n},{\theta _n})} \right\|_{{E^ {-1} }}} \le 2\sqrt \frac{1}{n}
,\text{ } i.e.~~\left| {{{\left\langle {{{\widetilde I}^{'}}({u_n},{\theta _n}),z} \right\rangle }_{{E^ {-1} } \times E}}} \right| \le 2\sqrt \frac{1}{n}  {\left\| z \right\|_E}$ holds for all \[z \in {\widetilde T_{({u_n},{\theta _n})}}: = \{ ({z_1},{z_2}) \in E,\text{ }{\left\langle {{u_n},{z_1}} \right\rangle _{{L^2}}} = 0\}. \]
Let ${v_n} = \kappa ({u_n},{\theta _n})$, $\forall n \in {\mathbb{N}^ + }$, then we prove that $\{ {v_n}\}\subset S(c)$ satisfies \eqref{4.5}. Firstly, from $(i)$ and the fact that ${I({v_n}) = I(\kappa ({u_n},{\theta _n})) = \widetilde I({u_n},{\theta _n})}$, we have $I({v_n}) \to {\gamma}(c)$ as $n \to +\infty$. Secondly, direct calculation implies that
\begin{equation}\label {4.9}
\begin{array}{rl}
\displaystyle
2Q({v_n})
&=\displaystyle{{{\| {\nabla v_n} \|}_2^2}}  -  {{{\| xv_n \|}_2^{2}}}-2a\delta_p { {{\| v_n \|}_p^p}} \\
&= \displaystyle{e^{2\theta_n }} {{{\| {\nabla u_n} \|}_2^2}}-{e^{-2\theta_n }} {{{\| xu_n \|}_2^{2}}}-2a\delta_p{e}^{p\delta_p\theta_n} { {{\| u_n \|}_p^p}}\\
&=\left\langle {{{\widetilde I}^{'}}({u_n},{\theta _n}),(0,1)} \right\rangle.\\
\end{array}
\end{equation}
Thus $(iii)$ yields $Q({v_n}) \to 0 \text{ as }n \to \infty,  \text{ for } (0,1) \in {\widetilde T_{({u_n},{\theta _n})}}.$
Finally, we prove that $${{{\left. {{I'}} \right|}_{S(c)}}({v_n}) \to 0 \text{ as }n \to \infty }.$$  We claim that for $n \in\mathbb{ N}$ sufficiently large, it holds that
$$\left| {\left\langle {{I'}({v_n}),\eta } \right\rangle } \right| \le \frac{2\sqrt2}{{\sqrt n }}{\left\| \eta  \right\|}~~~~,\forall \eta  \in {T_{{v_n}}}= \{ \eta \in \Sigma,\text{ }{\left\langle {{v_n},\eta } \right\rangle _{{L^2}}} = 0\}.$$
In fact, for any $\eta  \in {T_{{v_n}}}$, let $\widetilde\eta  = \kappa (\eta , - {\theta _n})$, we have
\begin{equation}\label{4.10}
\begin{array}{rl}
\displaystyle
\left\langle {{I^{'}}(v_n),\eta } \right\rangle= \left\langle {{{\widetilde I}^{'}}(u_n,\theta _n),(\widetilde \eta ,0)} \right\rangle.\\
\end{array}
\end{equation}
Since $\dis\int_{{\R^N}} {{u_n}\widetilde\eta }  = \dis\int_{{\R^N}} {{v_n}\eta }$, we obtain $(\widetilde\eta ,0) \in {\widetilde T_{({u_n},{\theta _n})}} \Leftrightarrow \eta  \in {T_{{v_n}}}$. It follows from $(ii)$ that
\[\left| {{\theta _n}} \right| = \left| {{\theta _n} - 0} \right| \le \mathop {\min }\limits_{0 \le t \le 1} {\left\| {({u_n},{\theta _n}) -({g_n}(t),0)} \right\|_E} \le \frac{1}{{\sqrt n }}.\]
Consequently, for $n$ large enough, we have
\[\left\| {(\widetilde\eta ,0)} \right\|_E^2 = {\left\| {\widetilde\eta } \right\|_\Sigma^2} =  {{{\| \eta  \|}_2^2}}  + {e^{ - 2{\theta _n}}} {{{\| {\nabla \eta } \|}_2^2}}+{e^{ 2{\theta _n}}}\dis  {{{\| {x \eta} \|}_2^2}}  \le 2{\left\| \eta  \right\|_\Sigma^2}.\]
Thus, $(iii)$ implies that
\[\left| {\left\langle {{I^{'}}({v_n}),\eta } \right\rangle } \right| = \left\langle {{{\widetilde I}^{'}}({u_n},{\theta _n}),(\widetilde\eta ,0)} \right\rangle  \le \frac{2}{{\sqrt n }}\left\| {(\widetilde\eta ,0)} \right\|_E \le \frac{2\sqrt2}{{\sqrt n }}{\left\| \eta  \right\|_\Sigma}.\]
It results to
\[\left\| {{{\left. {{I^{'}}} \right|}_{S(c)}}({v_n})} \right\|_{\Sigma^{-1}} = \mathop {\sup }\limits_{\eta  \in {T_{{v_n}}},\left\| \eta \right\| \le 1} \left| {\left\langle {{I^{'}}({v_n}),\eta } \right\rangle } \right| \le \frac{2\sqrt2}{{\sqrt n }} \to 0 \text{ as }n \to +\infty .\]
\end{proof}

\begin{proposition}\label{pro4.5}
Assume that $a \!>\!0$, $N\!=\!2,3$, $2\!+\!\frac{4}{N}\!<\!p\!<\!2^*$,  $0\!<\!|\Omega|\!<\!\sqrt{1-(\frac{2}{p\delta_p})^2}$ and $c\!<\!c_0$ for $c_0$ obtained by Theorem \ref{the1.5}. Let
$\{ {v_n}\}\subset S(c)$ be a sequence such that
\begin{equation}\label {4.12}
\begin{array}{rl}
\displaystyle
\left\{ {\begin{array}{*{20}{c}}
   {I({v_n}) \to {\gamma}(c),}  \\
   {{{\left. {{I^{'}}} \right|}_{S(c)}}({v_n}) \to 0,}  \\
   {Q({v_n}) \to 0} \\
\end{array}} \right.
\end{array}
\end{equation}
as $n \to +\infty$. Then there exist a ${v} \in \Sigma$, a sequence $\{\omega_n\}\subset \R$ and a $\widetilde{\omega}\in \R$ such that\\
$(i) {v_n} \to {v} \text{ in }\Sigma$, up to a subsequence, as $n \to +\infty$;\\
$(ii) \omega_n \to \widetilde{\omega}\text{ in } \R$, up to a subsequence, as $n \to +\infty$;\\
$(iii)\text{Re} \big[ {{I^{'}}({v_n})- \omega_n  {v_n} }  \big] \to 0\text{ in } \Sigma^{-1}$, up to a subsequence, as $n \to +\infty$;\\
$(iv) \text{Re} \big[ {{I^{'}}({v})-\widetilde{\omega} {v} }   \big]= 0 \text{ in } \Sigma^{-1}$.
\end{proposition}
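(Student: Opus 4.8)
The plan is to run the compactness argument of \cite{bj}, the one genuinely new point being the treatment of the sign--indefinite rotation term, which will be controlled throughout by the interpolation inequality \eqref{Gn1.2} together with the two norm equivalences recorded in \eqref{boud2.4} and \eqref{cOmega}. First I would prove that $\{v_n\}$ is bounded in $\Sigma$. Since $Q(v_n)\to 0$, the same manipulation as in the proof of Theorem \ref{Asymtotic} gives
\[
I(v_n)=\Big(\frac12-\frac1{p\delta_p}\Big)\|\nabla v_n\|_2^2+\Big(\frac12+\frac1{p\delta_p}\Big)\|x v_n\|_2^2-\int_{\R^N}\bar v_n(\Omega\cdot L)v_n+o_n(1),
\]
and applying \eqref{Gn1.2} with $\varepsilon=\varepsilon_1$ as in \eqref{cOmega}, which is admissible precisely because $0<|\Omega|<\sqrt{1-(2/p\delta_p)^2}$, yields $I(v_n)\ge\mathcal{C}_\Omega\|v_n\|_{\dot{\Sigma}}^2+o_n(1)$. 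Combined with $I(v_n)\to\gamma(c)$ this forces $\mathcal{C}_\Omega\|v_n\|_{\dot{\Sigma}}^2\le\gamma(c)+1$ for $n$ large, and since $\|v_n\|_2^2=c$, the sequence $\{v_n\}$ is bounded in $\Sigma$. Passing to a subsequence, $v_n\rightharpoonup v$ in $\Sigma$, and Lemma \ref{Compact} gives $v_n\to v$ in $L^q(\R^N,\mathbb{C})$ for every $q\in[2,2^*)$ as well as $v_n\to v$ a.e.; in particular $\|v\|_2^2=c$, so $v\in S(c)$.

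Next I would extract the Lagrange multipliers, which produces items $(ii)$ and $(iii)$. Because $\{v_n\}$ is bounded in $\Sigma$ and $I'|_{S(c)}(v_n)\to0$ in $\Sigma^{-1}$, Lemma \ref{lem4.4} shows that $\text{Re}[I'(v_n)-\omega_n v_n]\to0$ in $\Sigma^{-1}$, where $\omega_n:=\frac1c\langle I'(v_n),v_n\rangle\in\R$; this is $(iii)$. Since $\{v_n\}$ is bounded in $\Sigma$, the quantities $\{\|v_n\|_p^p\}$ (by \eqref{equ2.2}) and $\{\int_{\R^N}\bar v_n(\Omega\cdot L)v_n\}$ (by \eqref{Gn1.2}) are bounded, hence $\{\omega_n\}$ is bounded in $\R$ and $\omega_n\to\widetilde{\omega}$ along a subsequence, which is $(ii)$.

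For the strong convergence $(i)$ I would test $\text{Re}[I'(v_n)-\omega_n v_n]\to0$ against the bounded sequence $v_n-v$. In the resulting relation $\text{Re}\langle I'(v_n)-\omega_n v_n,v_n-v\rangle\to0$: the term $\omega_n\,\text{Re}\langle v_n,v_n-v\rangle\to0$ because $\{\omega_n\}$ is bounded and $v_n\to v$ in $L^2$; the nonlinear term tends to $0$ since $v_n\to v$ in $L^p$; the cross rotation term $\int_{\R^N}(\bar v_n-\bar v)(\Omega\cdot L)v=-i\int_{\R^N}(\Omega\wedge x)(\bar v_n-\bar v)\cdot\nabla v\to0$ because $(\Omega\wedge x)(v_n-v)\rightharpoonup0$ in $L^2$ and $\nabla v\in L^2$; and, using $\nabla v_n\rightharpoonup\nabla v$ and $x v_n\rightharpoonup x v$ in $L^2$, one has $\text{Re}\int_{\R^N}\nabla v_n\cdot\nabla(\bar v_n-\bar v)=\|\nabla(v_n-v)\|_2^2+o_n(1)$ and likewise for the potential term. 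Collecting these facts,
\[
\|v_n-v\|_{\dot{\Sigma}}^2-2\int_{\R^N}(\bar v_n-\bar v)(\Omega\cdot L)(v_n-v)\to0,
\]
and since the left--hand side equals $2\big[\frac12\|v_n-v\|_{\dot{\Sigma}}^2-\int_{\R^N}(\bar v_n-\bar v)(\Omega\cdot L)(v_n-v)\big]\ge 2C_*(\Omega)\|v_n-v\|_{\dot{\Sigma}}^2$ by \eqref{boud2.4} (valid because $0<|\Omega|<1$), I conclude $v_n\to v$ in $\dot{\Sigma}$, hence in $\Sigma$. Item $(iv)$ then follows at once from the continuity of $I'$: $I'(v_n)\to I'(v)$ and $\omega_n v_n\to\widetilde{\omega} v$ in $\Sigma^{-1}$, so $\text{Re}[I'(v)-\widetilde{\omega} v]=\lim_n\text{Re}[I'(v_n)-\omega_n v_n]=0$.

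The step I expect to be the main obstacle is the boundedness in the first paragraph: in contrast with the rotationless setting, where $Q(v_n)\to0$ and $I(v_n)$ bounded already suffice, here the rewritten energy carries the indefinite term $-\int_{\R^N}\bar v_n(\Omega\cdot L)v_n$, and making $I(v_n)+o_n(1)$ coercive on $\dot{\Sigma}$ genuinely requires the sharper restriction $0<|\Omega|<\sqrt{1-(2/p\delta_p)^2}$, i.e.\ the equivalence $\|\cdot\|_{\Omega_2}\approx\|\cdot\|_{\dot{\Sigma}}$. Once boundedness is secured, the remaining steps are standard as in \cite{bj}, the only point needing attention being the weak continuity of the rotation bilinear form, which follows from $(\Omega\wedge x)\varphi\in L^2$ for every $\varphi\in\Sigma$.
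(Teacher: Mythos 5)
Your proposal is correct and follows essentially the same route as the paper: boundedness from $Q(v_n)\to 0$ plus the $\|\cdot\|_{\Omega_2}\approx\|\cdot\|_{\dot{\Sigma}}$ equivalence under $0<|\Omega|<\sqrt{1-(2/p\delta_p)^2}$, the multiplier via Lemma \ref{lem4.4}, and strong convergence from testing against $v_n-v$ together with \eqref{boud2.4}. The only (immaterial) difference is that you expand the cross terms by weak convergence and deduce $(iv)$ from $(i)$, whereas the paper first obtains $(iv)$ from $(iii)$ and then subtracts the limit equation to isolate the quadratic form in $v_n-v$.
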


\begin{proof}
We first show that $\{v_n\}$ is bounded in $\Sigma$. Notice that $p\delta_p>2$ as $p\in(\bar{p},2^*)$. By using $Q({v_n}) \to 0$, we have
\[I({v_n})=\Big(\frac{1}{2}-\frac{1}{p\delta_p} \Big) {{{\| {\nabla {v_n}} \|}_2^2}}  \!+\! \Big(\frac{1}{2}+\frac{1}{p\delta_p} \Big) {{{\| x{v_n} \|}_2^{2}}}\!-\!\int_{{\R^N}} \bar{{v}}_n(\Omega \cdot L) {v_n}+o_n(1) \leq {\gamma}(c) +1.\]
Since $0<|\Omega|<\sqrt{1-(\frac{2}{p\delta_p})^2}$, we can choose $\varepsilon_1\in (\frac{p\delta_p}{p\delta_p+2}|\Omega|^{2},1-\frac{2}{p\delta_p})$ and denote
\begin{align*} 
C_{1}(\Omega):=\frac{1}{2}-\frac{1}{p\delta_p}-\frac{\varepsilon_1}{2}>0,~~~~ C_{2}(\Omega):=\frac{1}{2}+\frac{1}{p\delta_p}-\frac{|\Omega|^{2}}
{2\varepsilon_1}>0,~~~~\mathcal{C}_\Omega=\min\{C_{1}(\Omega),C_{2}(\Omega) \}.
\end{align*} 
Applying inequality \eqref{Gn1.2} with $\varepsilon\!=\!\varepsilon_1$, we have
\begin{align*}
{\gamma}(c) +1\!\geq\!I({v_n})
\!\geq\! C_{1}(\Omega)\|\nabla {v_n} \|_{2}^{2}\!+\!C_{2}(\Omega) \|x {v_n} \|_{2}^{2}+o_n(1) \!\geq\! \mathcal{C}_\Omega \|{v_n}\|_{\dot{\Sigma}}^2+o_n(1).
\end{align*}
Thus, $\{v_n\}$ is bounded in $\Sigma$. Then, up to a subsequence, there exists a ${v} \in \Sigma$ such that
\[\left\{ {\begin{array}{*{20}{c}}
   {{v_n} \rightharpoonup {v}\text{ in } \Sigma},       \\
   {{v_n} \to {v}\text{ in }{L^2}({\R^N},\mathbb{C})},  \\
   {{v_n} \to {v}\text{ in }{L^p}({\R^N},\mathbb{C})},  \\
   {{v_n} \to {v}\text{ a.e in }{\R^N}}.                \\
\end{array}} \right.\]
By Lemma \ref{lem4.4}, we know that
\[{\left. {{I^{'}}} \right|_{S(c)}}\left( {{v_n}} \right)\to 0 {\text{ in }}{\Sigma^{ - 1}}\Longleftrightarrow {I^{'}}\left( {{v_n}} \right) - \frac{1}{c}\langle {I^{'}}\left( {{v_n}} \right),{v_n}\rangle {v_n} \to 0 {\text{ in }}{\Sigma^{ - 1}} \text{ as } n\rightarrow +\infty.\]
Therefore, we have $\text{Re} \left\langle {{I^{'}}({v_n}) - \frac{1}{c}\left\langle {{I^{'}}({v_n}),{v_n}} \right\rangle {v_n},\varphi} \right\rangle\to0$ for each $\varphi\in \Sigma$, that is
\begin{align}\label {4.13}
\text{Re} \Big[\frac{1}{2}  \int_{{\R^N}} \big({\nabla {v_n} \nabla \bar{\varphi} }  \!+\!  {|x|^2{v_n} \bar{\varphi} }\big)\!-\!\int_{{\R^N}} \bar{\varphi}(\Omega \cdot L) {v_n}
\!-\!a\int_{{\R^N}}|{v_n}|^{p-2} {v_n}\bar{\varphi}\!-\!\omega_n \int_{{\R^N}} {{v_n}\bar{\varphi} }\Big]\!\to\! 0,
\end{align}
where \begin{equation}\label {4.14}
\begin{array}{rl}
\displaystyle
{\omega _n}=\frac{1}{c}\langle {I^{'}}\left( {{v_n}} \right),{v_n}\rangle=\frac{1}{c}\Big(  \frac{1}{2} {{{\| {\nabla {v_n}} \|}_2^2}}  \!+\!\frac{1}{2} {{{\| x{v_n} \|}_2^{2}}}\!-\!\int_{{\R^N}} \bar{{v}}_n(\Omega \cdot L) {v_n}\!-\!a \|{v_n}\|^{p}_p \Big).
\end{array}
\end{equation}
Thus $(iii)$ is proved. By Lemma \ref{lem2.1} and inequality \eqref{Gn1.2}, each term in the right hand of \eqref{4.14} is bounded. So there exists $\widetilde{\omega} \in \R$ such that, up to a subsequence, $\omega_n \to \widetilde{\omega}$ as $n \to +\infty$.
Thus $(ii)$ is proved and $(iv)$ follows from $(iii)$.
By $(ii)$ $(iii)$ and $(iv)$ we have
\begin{equation}\label{4.15}
\begin{array}{rl}
\displaystyle
\text{Re} \left\langle {{I^{'}}({v_n}) - \widetilde \omega {v_n},{v_n} - v} \right\rangle  = o_n(1) \text{ and } \text{Re} \left\langle {{I^{'}}({v}) - \widetilde \omega{v},{v_n} - v} \right\rangle  = 0.
\end{array}
\end{equation}
We deduce from \eqref{4.15} and \eqref{boud2.4} that
\begin{align*}
o_n(1)=\frac{1}{2}\|({v_n} - v)\|_{\dot{\Sigma}}^2-\int_{\mathbb{R}^{N}}\overline{({v_n} - v)}(\Omega \cdot L) ({v_n} - v) d x\geq C_{*}(\Omega) \|({v_n} - v)\|_{\dot{\Sigma}}^2 .
\end{align*}
It results to ${v_n} \to {v}$ in $\dot{\Sigma}$ as $n \to \infty$. As ${v_n} \to {v}$ in ${L^2}({\R^N},\mathbb{C})$, we see that ${v_n} \to {v}$ in ${\Sigma}$ and $(i)$ is proved.
\end{proof}

\vspace{.25cm}
\begin{proof}[\bf Proof of Theorem \ref{the1.8}]
Propositions \ref{pro4.3}-\ref{pro4.5} guarantee the existence of a couple of weak solution $(\hat{u}_c,\hat{\omega}_c)\in \Sigma \times \R$ to problem \eqref{1.3} with $\left\| {\hat{u}_c} \right\|_2^2 = c$. By using \eqref{4.4}, we have
\[I(\hat{u}_c)=\gamma(c)>I(u_c)=m_c^r.\]
\end{proof}

\end{document}